\documentclass[a4paper, 12pt]{amsart}

\topmargin0cm
\textheight23cm
\textwidth16cm
\oddsidemargin0cm
\evensidemargin0cm

\usepackage{hyperref}
\usepackage{amssymb}

\theoremstyle{plain}
\newtheorem{theorem}{Theorem}[section]
\newtheorem{lemma}[theorem]{Lemma}

\newtheorem{corollary}[theorem]{Corollary}
\newtheorem{proposition}[theorem]{Proposition}

\theoremstyle{definition}

\newtheorem{definition}[theorem]{Definition}
\newtheorem{remark}[theorem]{Remark}
\newtheorem{notation}[theorem]{Notation}

\newtheorem*{todo*}{Todo}

\newcommand{\Hs}{\mathcal H}
\newcommand{\G}{\mathcal G}
\newcommand{\sN}{N}

\newcommand{\Iso}{\operatorname{Iso}}
\newcommand{\supp}{\operatorname{supp}}

\newcommand{\lsp}{\operatorname{span}}
\newcommand{\clsp}{\overline{\operatorname{span}}}

\newcommand{\dom}{\operatorname{dom}}
\newcommand{\cod}{\operatorname{cod}}

\newcommand{\units}[1]{#1^{\times}}

\newcommand{\Clc}[2]{C_{\operatorname{lc}}(#1, \units{#2} \cup \{0\})}

\numberwithin{equation}{section}

\title{Reconstruction of graded groupoids from  graded Steinberg algebras}

\author{Pere Ara}
\address{Department of Mathematics\\
Universitat Auto\'noma de Barcelona\\
 08193 Bellaterra (Barcelona), Spain}
\email{para@mat.uab.cat}
\author{Joan Bosa}
\address{
School of Mathematics and Statistics\\
University of Glasgow\\
15 University Gardens, G12 8QW\\ Glasgow, United Kingdom}
\email{joan.bosa@glasgow.ac.uk}

\author{Roozbeh Hazrat}
\address{
Centre for Research in Mathematics\\
Western Sydney University\\
Australia}
\email{r.hazrat@westernsydney.edu.au}

\author{Aidan Sims}
\address{School of Mathematics and Applied Statistics\\
University of Wollongong\\
NSW 2522, Australia}
\email{asims@uow.edu.au}

\date{\today}
\thanks{The first and second-named authors were partially supported by the grants DGI MICIIN
MTM2011-28992-C02-01 and MINECO MTM2014-53644-P. The second author is supported by the
Beatriu de Pin\' os fellowship (2014 BP-A 00123). This research was supported by Australian
Research Council grant DP150101598.}

\begin{document}

\begin{abstract}
We show how to reconstruct a graded ample Hausdorff groupoid with topologically principal
neutrally graded component from the ring structure of its graded Steinberg algebra over
any commutative integral domain with~1, together with the embedding of the canonical
abelian subring of functions supported on the unit space. We deduce that
diagonal-preserving ring isomorphism of Leavitt path algebras implies $C^*$-isomorphism
of $C^*$-algebras for graphs $E$ and $F$ in which every cycle has an exit.
\end{abstract}

\maketitle

\section{Introduction}

Since the independent introduction of Leavitt path algebras by Abrams--Aranda-Pino
\cite{AA-P} and by Ara--Moreno--Pardo \cite{AMP}, there has been a great deal of interest
in the many parallels between the theory of Leavitt path algebras and that of graph
$C^*$-algebras---particularly because the reasons for these parallels are frequently not
apparent from the standard generators-and-relations picture. A key unresolved conjecture,
due to Abrams and Tomforde, says that if $E$ and $F$ are graphs whose complex Leavitt
path algebras are isomorphic as rings, then they have isomorphic $C^*$-algebras (see
\cite[Question~6]{TomfordeList}). In this paper, we make some progress on this question
by studying diagonal-preserving isomorphisms of Steinberg algebras. We confirm a slight
weakening of Abrams and Tomforde's conjecture: if $E$ and $F$ are graphs in which every
cycle has an exit and there is a ring-isomorphism $L_{\mathbb{C}}(E) \cong
L_{\mathbb{C}}(F)$ that respects the canonical diagonal, then there is a
diagonal-preserving isomorphism $C^*(E) \cong C^*(F)$.

Our approach uses Steinberg algebras, introduced in \cite{Ste10} and, independently, in
\cite{CFST14}. The groupoid $C^*$-algebra of a groupoid $\G$ is a norm completion of the
convolution algebra of continuous, compactly supported functions from $\G$ to
$\mathbb{C}$. When $\G$ is totally disconnected, the Steinberg algebra $A_R(\G)$ for a
ring $R$ is the convolution algebra of locally constant functions from $\G$ to $R$; so
$A_{\mathbb{C}}(\G)$ is a dense subalgebra of $C^*(\G)$. In particular, when $\G$ is the
graph groupoid \cite{KPRR} of a directed graph $E$, so that $C^*(\G)$ is the graph
algebra $C^*(E)$, the Steinberg algebras $A_R(\G)$ are precisely the Leavitt path
$R$-algebras $L_R(E)$ (see \cite[Remark~4.4]{CFST14} and \cite[Example~3.2]{CS15}).

The algebra $C_0(\G^{(0)})$ of continuous complex-valued functions vanishing at infinity
on $\G^{(0)}$ is a commutative $C^*$-subalgebra of $C^*(\G)$, and the algebra $D$ of
locally constant functions from $\G^{(0)}$ to $R$ is a commutative subalgebra of each
$A_R(\G)$. In \cite{Ren2008} (see also \cite{Kumjian}), Renault showed that if $\G$ is
topologically principal, then it can be reconstructed from the data $(C^*(\G),
C_0(\G^{(0)}))$. So if $\G$ and $\Hs$ are topologically principal groupoids, then there
is an isomorphism $C^*(\G) \cong C^*(\Hs)$ that carries $C_0(\G^{(0)})$ to
$C_0(\Hs^{(0)})$ if and only if $\G \cong \Hs$. These results have been used recently to
prove remarkable results about continuous orbit equivalence rigidity for symbolic
dynamical systems \cite{BCW, Li, MM}.

In this paper, we prove that if $\G$ is ample and Hausdorff, $c : \G \to \Gamma$ is a
1-cocycle taking values in a discrete group, $c^{-1}(0)$ is a topologically principal
subgroupoid of $\G$, and $R$ is a commutative integral domain with~1, then $D$ is a
maximal commutative subring of the ring $A_R(\G)$, and we can recover $\G$ from the pair
$(A_R(\G), D)$ regarded as a $\Gamma$-graded ring with distinguished commutative subring.
As a direct consequence, we deduce the following. Suppose that $E$ and $F$ are directed
graphs in which every cycle has an exit, and suppose that there is a commutative integral
domain $R$ with~1 for which there is a ring isomorphism $\pi : L_R(E) \cong L_R(F)$ that
such $\pi(s_{\mu} s_{\mu^*})s_\eta s_{\eta^*} = s_\eta s_{\eta^*} \pi(s_{\mu} s_{\mu^*})$
for every path $\mu$ in $E$ and every path $\eta$ in $F$. Then there is a
diagonal-preserving isomorphism $C^*(E) \cong C^*(F)$. We also make a little progress on
the vexing question \cite[Question~1]{TomfordeList}: are the Leavitt path algebras
$L_{2,K}$ and $L_{2-,K}$ isomorphic for a field $K$? Our results imply that there is no
\emph{diagonal-preserving} ring-isomorphism between these two algebras.

A similar result about Leavitt path algebras, of which we became aware late in the
preparation of this work, was obtained recently by Brown--Clark--an Huef \cite{BCaH}.
Neither our result nor theirs is a direct generalisation of the other, though: their
theorem requires a $^*$-ring isomorphism and that $E$ be row-finite with no sinks (no
sources with their conventions), whereas ours requires only a ring isomorphism and does
not insist that $E$ should be row-finite or have no sinks; but our result requires that
every cycle in $E$ have an exit whereas theirs does not. But our result has many further
applications; for example, to Kumjian--Pask algebras of higher-rank graphs, to algebras
associated to Cantor minimal systems, to the algebras $L^{ab}(E, C)$ associated to
separated graphs by Ara and Exel in \cite{AE}, and to all groupoids arising from partial
actions of countable discrete groups on totally disconnected metrisable spaces
\cite{Exelbook}. Indeed, by \cite[Theorems 5.17~and~6.3]{Ste10}, our result applies to
all algebras associated to inverse semigroups that are weak semilattices.

\subsection*{Acknowledgement} We are very grateful to the anonymous referee, whose
helpful comments have significantly improved the exposition of the paper, and have also
suggested interesting lines of further enquiry.

\section{Preliminaries}\label{sec:prelims}

\subsection{Groupoids and Inverse Semigroups}
We give a very brief introduction to Hausdorff ample groupoids. For more detail, see
\cite{exel:bbms2007, Paterson}.

A groupoid $\G$ is a small category with inverses. We denote the set of identity
morphisms of $\G$ by $\G^{(0)}$, and call it the \emph{unit space} of $\G$. So $\G^{(0)}
= \{\gamma\gamma^{-1} : \gamma \in \G\}$. For $\gamma \in \G$, we write $r(\gamma) :=
\gamma\gamma^{-1}$ and $s(\gamma) := \gamma^{-1}\gamma$. So $r,s : \G \to \G^{(0)}$
satisfy $r(\gamma)\gamma = \gamma = \gamma s(\gamma)$ for all $\gamma \in \G$. A pair
$(\alpha,\beta) \in \G \times \G$ is then \emph{composable} if and only if $s(\alpha) =
r(\beta)$. We write $\G^{(2)}$ for the set of all composable pairs.

For $U,V\subseteq \G$, we write
\begin{equation}\label{eq:set product}
    UV = \{\alpha\beta \mid \alpha\in U,\,\beta\in V\text{ and }r(\beta)=s(\alpha)\}.
\end{equation}
Given units $u,v \in \G^{(0)}$, we write, as usual (e.g. \cite{Renault}), $\G_u$ for $s^{-1}(u)$ and $\G^v$ for $r^{-1}(v)$.
We then write $\G^v_u$ for $\G^v \cap \G_u$. The \emph{isotropy group} at the unit
$u\in\G^{(0)}$ is then the group $\G_u^u$. We say $u$ has trivial isotropy if $\G^u_u=\{u\}$.
The \emph{isotropy subgroupoid} of $\G$ is $\Iso(\G):=\bigcup_{u\in\G^0} \G^u_u$.

We say that $\G$ is a Hausdorff groupoid if it is endowed with a Hausdorff topology under
which the range, source and inverse maps are continuous, and the composition map is
continuous with respect to the subspace topology on $\G^{(2)} \subseteq \G \times \G$.
This implies, in particular, that both $\G^{(0)}$ and $\Iso(\G)$ are closed in $\G$. We
say $\G$ is \emph{\'etale} if $r$ and $s$ are both local homeomorphisms. In this case,
the topology on $\G$ has a basis of \emph{local bisections}\footnote{In much of the
groupoid literature, these are simply called \emph{bisections}, or sometimes
\emph{slices}.}: subsets $U\subseteq \G$ such that $r_{|_U}$ and ${s_{|_U}}$ are
homeomorphisms onto open subsets of $\G^{(0)}$. This guarantees that $\G^{(0)}$ is clopen
in $\G$. We say that $\G$ is \emph{ample} if it is \'etale and $\G^{(0)}$ has a basis of
compact open sets. If $\G$ is an ample Hausdorff groupoid, then $\G$ admits a basis of
compact open local bisections; since $\G$ is Hausdorff, these compact open sets are
clopen. The composition map in an ample Hausdorff groupoid is an open map. In this paper
we deal only with Hausdorff groupoids. We will work frequently with \emph{topologically
principal} groupoids in which the set of units with trivial isotropy is dense in
$\G^{(0)}$.

Recall (see e.g. \cite{exel:bbms2007, Law98, Paterson, Ste10} for more details) that an
\emph{inverse semigroup} is a semigroup $S$ such that for each $s\in S$ there exists a
unique element $s^* \in S$ satisfying $ss^*s=s$ and $s^*ss^*=s^*$. We denote by $E(S)$
the set of idempotents of $S$, which is automatically a commutative semigroup. If $S$ has
an element $0$ such that $0s = 0 = s0$ for all $s \in S$, then we call $S$ an
\emph{inverse semigroup with 0}. There is a natural order on $E(S)$ given by  $e\leq f$
if and only if $ef=e$, and this order extends to a partial order on $S$ given by $s\leq
t$ if $s=et$ for some idempotent $e$ (in which case we may always take $e = ss^*$). Given
an ample Hausdorff groupoid $\G$, the collection $S_\G$ of compact open local bisections
of $\G$ forms an inverse semigroup\footnote{Indeed, since $\G$ is Hausdorff, $S_\G$ is a
Boolean inverse meet-semigroup (see, for example, \cite{Weh}) with meet given by
intersection. But since the additional Boolean meet-semigroup structure comes from the
concrete realisation of $S_\G$ as a collection of subsets of $\G$, we will not introduce
the formal axiomatisation of a Boolean inverse meet-semigroup here, but work concretely
with the usual set operations when manipulating elements of $S_\G$.} with 0 under the
multiplication given by~\eqref{eq:set product}, with $U^* = U^{-1} = \{\gamma^{-1} :
\gamma \in U\}$. We then have $E(S_\G) := \{U \in S_\G : U \subseteq \G^{(0)}\}$, and the
product in $E(S_\G)$ agrees with the intersection operation on subsets of $\G^{(0)}$. The
zero element of $S_\G$ is the empty local bisection $\emptyset$.

If $S$ is an inverse semigroup with 0, and $\Gamma$ is a discrete group, then a
\emph{$\Gamma$-grading} of $S$ is a map $c : S \setminus\{0\} \to \Gamma$ such that
whenever $s,t \in S$ satisfy $st \not= 0$, we have $c(st) = c(s)c(t)$.

\subsection{Graded Steinberg algebras}\label{subset:GSAs}

Let $\G$ be a Hausdorff ample groupoid and let $R$ be a commutative ring with~1. We write
$A_R(\G)$ for the space of all locally constant functions $f : \G \to R$ with compact
support. This becomes an $R$-algebra under the convolution product
\[
f * g(\gamma) = \sum_{r(\alpha)
    = r(\gamma)} f(\alpha) g(\alpha^{-1}\gamma)
    = \sum_{\alpha\beta = \gamma} f(\alpha)g(\beta),
\]
and pointwise addition and $R$-action. For any involution on $R$ (possibly the trivial
one) the algebra $A_R(\G)$ becomes a $^*$-algebra with $f^*(\gamma) = f(\gamma^{-1})^*$.
For $f \in A_R(\G)$, we write $\supp(f)$ for the support $\{\gamma \in \G : f(\gamma)
\not= 0\}$.

For the majority of this paper, we will be interested in the situation where $R$ is in
fact an integral domain. We will write $\units{R}$ for the group of units of $R$.

Note that $A_R(\G)=\lsp_R\{1_U\mid U\in S_\G\}$. Specifically, given $f \in A_R(\G)$, the
sets $f^{-1}(r)$, indexed by $r \in R$, are compact open sets, so each admit a finite
cover by elements of $S_\G$. Since $U \setminus V \in S_\G$ whenever $U,V \in S_\G$, we
can find, for each $r$ such that $f^{-1}(r) \not= \emptyset$, a finite set $F_r \subseteq
S_\G$ of mutually disjoint compact open local bisections with $f^{-1}(r) =\bigsqcup_{U
\in F_r} U $. We then have $f = \sum_{f^{-1}(r) \not= \emptyset} \sum_{U \in F_r} r\cdot
1_U$. That is, every element of $A_R(\G)$ can be written as an $R$-linear combination of
finitely many indicator functions of mutually disjoint compact open local bisections. We
will use this fact frequently, and, in this context, it will be useful to recall from
\cite[Proposition 4.5]{Ste10} that for $U, V \in S_\G$,
\[
1_U*1_V=1_{UV}\qquad\text{ and }\qquad 1_U^* = 1_{U^{-1}}.
\]

Let $\Gamma$ be a discrete group, and $c$ a continuous homomorphism from $\G$ to $\Gamma$
(that is, $c:\G\to\Gamma$ is a continuous groupoid cocycle). By \cite[Lemma~3.1]{CS15}
there is a $\Gamma$-grading of $A_R(\G)$ such that
\[
A_R(\G)_g = \{f \in A_R(\G) : \supp(f) \subseteq c^{-1}(g)\}\quad\text{ for all $g \in \Gamma$}.
\]
We say that a local bisection $U$ is \emph{homogeneous} if $c(U)$ is a singleton. The
collection of all homogeneous compact open local bisections of $\G$ is a $\Gamma$-graded
$^*$-closed subsemigroup of $S_\G$, denoted by $S_{\G, \star}$, under the grading
$\bar{c}(U) = g$ if $U \subseteq c^{-1}(g)$. Each homogeneous piece $A_R(\G)_g$ of the
Steinberg algebra is then precisely the $R$-linear span of indicator functions of
elements of $\bar{c}^{-1}(g)$. As above, each element of $A_R(\G)_g$ can be written as an
$R$-linear combination of indicator functions of finitely many mutually disjoint such
homogeneous local bisections.

Since we can regard $\G^{(0)}$ as a subgroupoid of $\G$, it makes sense to talk about the
Steinberg algebra $A_R(\G^{(0)})$, which is just the commutative algebra of locally
constant compactly supported functions from $\G^{(0)}$ to $R$ under pointwise operations.
Since $\G^{(0)}$ is clopen, there is an embedding $\iota : A_R(\G^{(0)}) \to A_R(\G)$
such that $\iota(f)|_{\G^{(0)}} = f$ and $\iota(f)|_{\G \setminus \G^{(0)}} = 0$. We use
this embedding to regard $A_R(\G^{(0)})$ as a commutative subalgebra of $A_R(\G)$. This
$A_R(\G^{(0)})$ contains local units\label{pg:localunits} for $A_R(\G)$: it contains a
set $\mathcal{E}$ of mutually commuting idempotents such that for every finite subset $X$
of $A_R(\G)$, there exists $e \in \mathcal{E}$ such that $X \subseteq e A_R(\G) e$.
Specifically, $\mathcal{E} = \{1_K : K \subseteq \G^{(0)}\text{ is compact and open}\}$
has the desired property because if $f \in A_R(\G)$, then $K := s(\supp(f)) \cup
r(\supp(f)) \subseteq \G^{(0)}$ is compact and open, and $1_K * f = f = f * 1_K$.

To keep our notation uncluttered, we shall write $D_{\G}$, or just $D$ when the groupoid
is clear, for $A_R(\G^{(0)}) \subseteq A_R(\G)$ throughout this note. Observe that when
$R$ is an integral domain, the set $E(D)$ of idempotent elements of $D$ is $\{1_K : K
\subseteq \G^{(0)}\text{ is compact and open}\}$.

We will need the following general result in the proof of our main theorem to show that a
homomorphism $\phi : A_R(\G) \to A_R(\Hs)$ of Steinberg algebras that carries $D_\G$ into
the relative commutant of $D_\Hs$ in fact carries $D_\G$ onto $D_\Hs$.

\begin{lemma}\label{lem:masa}
Let $\G$ be a topologically principal ample Hausdorff groupoid and $R$ be a commutative
integral domain with~1. Then, $D_\G$ is a maximal abelian subring of $A_R(\G)$.
\end{lemma}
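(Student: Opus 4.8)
The plan is to show that the relative commutant of $D_\G$ inside $A_R(\G)$ is contained in $D_\G$. Since $D_\G$ is already abelian, this suffices. So suppose $f \in A_R(\G)$ commutes with every element of $D_\G$; I want to show $\supp(f) \subseteq \G^{(0)}$. Write $f = \sum_{i=1}^n r_i 1_{U_i}$ with the $U_i \in S_\G$ mutually disjoint and the $r_i \in R \setminus \{0\}$, using the standard decomposition recalled in the preliminaries. The idea is that commuting with $1_K$ for $K \subseteq \G^{(0)}$ compact open forces the supports $U_i$ to be ``symmetric'' under $r$ and $s$ in a way that, combined with topological principality, pins each $\gamma \in \supp(f)$ to a unit.

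First I would translate the commutation condition. For $K \subseteq \G^{(0)}$ compact open we have $1_K * 1_U = 1_{KU}$ and $1_U * 1_K = 1_{UK}$, where $KU = \{\gamma \in U : r(\gamma) \in K\}$ and $UK = \{\gamma \in U : s(\gamma) \in K\}$. So $f * 1_K = 1_K * f$ for all such $K$ says that, evaluating at a point $\gamma$, $f(\gamma) 1_K(s(\gamma)) = 1_K(r(\gamma)) f(\gamma)$ for every compact open $K \subseteq \G^{(0)}$. Since $\G$ is ample and Hausdorff, the compact open subsets of $\G^{(0)}$ separate points of $\G^{(0)}$, so for any $\gamma \in \supp(f)$ we conclude $r(\gamma) = s(\gamma)$; that is, $\supp(f) \subseteq \Iso(\G)$. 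This already confines $f$ to the isotropy subgroupoid.

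Next I would use topological principality to push from $\Iso(\G)$ down to $\G^{(0)}$. Suppose for contradiction that some $\gamma_0 \in \supp(f)$ has $\gamma_0 \notin \G^{(0)}$. Pick a homogeneous compact open local bisection $U \ni \gamma_0$ with $U \subseteq \supp(f)$ on which $f$ is constantly equal to some $r \neq 0$ and with $U \cap \G^{(0)} = \emptyset$ (possible since $\G^{(0)}$ is clopen and $f$ is locally constant; shrink $U$ if necessary). Let $W = r(U) = s(U)$, a compact open subset of $\G^{(0)}$. The point is now to find a smaller compact open $K \subseteq W$ containing a unit with trivial isotropy, and derive a contradiction: since $\gamma_0 \in \Iso(\G)$, $r_{|_U}$ and $s_{|_U}$ are the same homeomorphism $U \to W$, so $U$ defines a homeomorphism of $W$; the unit $u = r(\gamma_0)$ is a fixed point of this homeomorphism (as $\gamma_0$ is isotropy at $u$ but $\gamma_0 \neq u$, so $u$ is a non-trivial isotropy unit). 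However, topological principality gives units with trivial isotropy arbitrarily close to $u$ in $W$; near such a unit $v$, the element of $U$ over $v$ is an isotropy element at $v$, forcing it to equal $v$, i.e. that part of $U$ lies in $\G^{(0)}$, contradicting $U \cap \G^{(0)} = \emptyset$. Making this precise requires a short argument: one shows the set $\{u \in W : U \cap \G^{(0)} \text{ meets every neighbourhood of } \gamma \text{ over } u\}$ is closed and its complement in $W$ contains the dense set of trivial-isotropy units but also must contain $r(\gamma_0)$ since $U \cap \G^{(0)}= \emptyset$ near $\gamma_0$ --- a clean way is to observe $U \cap \G^{(0)}$ is open and closed in $U$, so $r(U \cap \G^{(0)})$ and $r(U \setminus \G^{(0)})$ partition $W$ into two clopen sets; the trivial-isotropy units lie in $r(U \cap \G^{(0)})$, which is therefore dense, hence all of $W$, so $U \subseteq \G^{(0)}$, contradiction.

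The main obstacle, I expect, is exactly this last step: carefully using that the indicator $1_U$ and topological principality interact correctly on the unit space, in particular justifying that a trivial-isotropy unit $v \in W$ forces the unique element of $U$ lying over $v$ to be $v$ itself. This is where the hypothesis $R$ is an integral domain (so $E(D) = \{1_K\}$ and nonzero scalars can be cancelled) and Hausdorffness (so compact opens are clopen and the support decomposition into disjoint bisections is available) are both used. Once that is in place, combining $\supp(f) \subseteq \Iso(\G)$ with the density argument gives $\supp(f) \subseteq \G^{(0)}$, hence $f \in D_\G$, completing the proof.
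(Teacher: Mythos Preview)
Your argument is correct. The two-step structure---first showing that any $f$ commuting with $D_\G$ has $\supp(f)\subseteq\Iso(\G)$, then using topological principality to force $\supp(f)\subseteq\G^{(0)}$---works exactly as you describe. The pointwise identity $(f*1_K)(\gamma)=f(\gamma)1_K(s(\gamma))$ and its analogue for $1_K*f$ give the first step immediately, and your ``clean'' version of the second step is the right one: the unique element of $U$ lying over a trivial-isotropy unit $v\in W=r(U)=s(U)$ must itself be $v$, contradicting $U\cap\G^{(0)}=\emptyset$. (The word ``homogeneous'' in your choice of $U$ is stray---there is no grading in this lemma---but this is harmless.)

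The paper takes a shorter, contrapositive route. Given $f\notin D_\G$, it picks $\alpha\in\supp(f)\setminus\G^{(0)}$; since $\supp(f)\setminus\G^{(0)}$ is open and $s$ is an open map, topological principality lets one choose $\alpha$ so that $s(\alpha)$ has trivial isotropy, whence $r(\alpha)\neq s(\alpha)$. Disjoint compact open neighbourhoods $V\ni r(\alpha)$ and $W\ni s(\alpha)$ then give $(1_Vf1_W)(\alpha)=f(\alpha)\neq 0$ while $(1_V1_Wf)(\alpha)=0$, so $f1_W\neq 1_Wf$. Your approach is slightly longer but more structural: it isolates the general fact that the commutant of $D_\G$ is supported in $\Iso(\G)$, and then separately the fact (equivalent to topological principality for \'etale groupoids) that the interior of $\Iso(\G)$ is $\G^{(0)}$. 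The paper's proof reaches the conclusion in one stroke without naming these intermediate facts.
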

\begin{proof}
Certainly $D_\G$ is an abelian subring of $A_R(\G)$, so we just have to show that it is a
maximal abelian subring. For this purpose, suppose that $f \in A_R(\G) \setminus D_\G$;
we must find $a \in D_\G$ such that $fa \not= af$. Fix $\alpha \in \supp(f) \setminus
\G^{(0)}$. Because $\supp(f) \subseteq \G$ is open and $\G$ is topologically principal,
we may assume that the isotropy at $s(\alpha)$ is trivial; therefore, $s(\alpha) \not =
r(\alpha)$. So we can choose disjoint compact open neighbourhoods $V,W \subseteq
\G^{(0)}$ of $r(\alpha)$ and $s(\alpha)$, respectively. We now have $(1_V f 1_W)(\alpha)
= f(\alpha) \not= 0$ whereas $(1_V1_Wf)(\alpha) = 0$. In particular, $f 1_W \not= 1_W f$.
\end{proof}

\section{Reconstructing the groupoid}\label{sec:main}

In this section we consider a commutative integral domain $R$ with~1, and groupoids $\G$
endowed with a continuous cocycle $c$ whose kernel is topologically principal. We show
how to reconstruct $(\G, c)$ from the pair $(A_R(\G), D)$, regarded as a graded ring with
distinguished abelian subring. Our goal is the following result, which we prove at the
end of the section. Throughout $\Gamma$ is a group and $e$ is its neutral element.

\begin{theorem}\label{thm:main}
Let $\G$ and $\Hs$ be ample Hausdorff groupoids, let $R$ be a commutative integral domain
with~1, let $c : \G \to \Gamma$ and $d : \Hs \to \Gamma$ be gradings by a discrete group,
and suppose that $c^{-1}(e)$ and $d^{-1}(e)$ are topologically principal. Let $D_\G
\subseteq A_R(\G)$ and $D_\Hs \subseteq A_R(\Hs)$ be the abelian subalgebras consisting
of functions supported on $\G^{(0)}$ and $\Hs^{(0)}$. Then there is a graded isomorphism
$\rho : A_R(\G) \to A_R(\Hs)$ such that $\rho(D_\G) \subseteq D_\Hs$ if and only if there
is an isomorphism $\bar\rho : \G \to \Hs$ such that $d \circ \bar\rho = c$.
\end{theorem}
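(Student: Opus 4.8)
The reverse implication is easy: given an isomorphism $\bar\rho : \G \to \Hs$ with $d\circ\bar\rho = c$, the map $f \mapsto f\circ\bar\rho^{-1}$ is a graded $*$-isomorphism $A_R(\G)\to A_R(\Hs)$ carrying $D_\G$ onto $D_\Hs$. So the work is all in the forward direction: from a graded isomorphism $\rho : A_R(\G)\to A_R(\Hs)$ with $\rho(D_\G)\subseteq D_\Hs$ we must build the groupoid isomorphism.

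**Step 1: Upgrade $\rho(D_\G)\subseteq D_\Hs$ to $\rho(D_\G) = D_\Hs$.** Since $D_\Hs$ is commutative and $\rho(D_\G)$ sits inside it, $\rho(D_\G)$ lies in the relative commutant of $D_\Hs$ in $A_R(\Hs)$; but by Lemma~\ref{lem:masa} applied to the topologically principal groupoid $d^{-1}(e)$ (noting $D_\Hs = D_{d^{-1}(e)}$ inside $A_R(d^{-1}(e))$, which is the neutrally-graded piece), $D_\Hs$ is a maximal abelian subring, so the relative commutant of $D_\Hs$ is $D_\Hs$ itself, forcing $\rho(D_\G)\subseteq D_\Hs$; running the same argument for $\rho^{-1}$ gives equality. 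One must be slightly careful that the relevant ambient algebra for the masa argument is $A_R(\Hs)$ rather than $A_R(d^{-1}(e))$ — but since elements of $D_\Hs$ commute with $f\in A_R(\Hs)$ iff they commute with $f$'s neutrally-graded part (as $D_\Hs$ is graded in degree $e$), the proof of Lemma~\ref{lem:masa} goes through verbatim.

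**Step 2: Recover the unit spaces and then the groupoids as spectra of inverse semigroups.** From $\rho(D_\G) = D_\Hs$ we get a ring isomorphism $D_\G \cong D_\Hs$, which on idempotents restricts to an isomorphism of the Boolean algebras $\{K \subseteq \G^{(0)} \text{ compact open}\} \cong \{L\subseteq \Hs^{(0)}\text{ compact open}\}$ (here is where $R$ being an integral domain is used, via the last observation of Section~\ref{subset:GSAs}); by Stone duality this yields a homeomorphism $\bar\rho_0 : \G^{(0)}\to\Hs^{(0)}$. More substantially, I would recover the full groupoid via its inverse semigroup of homogeneous compact open bisections: show that $\rho$ induces a $\Gamma$-graded isomorphism $S_{\G,\star} \cong S_{\Hs,\star}$. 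The key point is to characterise, purely ring-theoretically and using the grading and $D$, which elements of $A_R(\G)$ are of the form $r\cdot 1_U$ for $U$ a homogeneous compact open bisection and $r\in\units R$: these should be exactly the homogeneous elements $n$ such that $n D_\G n^* \subseteq D_\G$ and $n^* D_\G n\subseteq D_\G$ and which are "minimal" in a suitable sense relative to the partial order coming from multiplication by idempotents of $D_\G$ — i.e. the normalisers of $D_\G$ in the graded sense. Since $\rho$ preserves the grading, preserves $D_\G$, and is a ring isomorphism, it preserves this class of elements, hence induces a bijection (up to the $\units R$-ambiguity, which one quotients out) $S_{\G,\star}\to S_{\Hs,\star}$ respecting products, inverses, and the $\Gamma$-grading.

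**Step 3: Reconstruct $\bar\rho$ from the inverse semigroup isomorphism and check the cocycle condition.** An ample Hausdorff groupoid is recovered from its inverse semigroup of compact open bisections as the tight (equivalently, here, the "ultrafilter") spectrum — concretely, $\gamma\in\G$ corresponds to a germ $[U,x]$ with $U\in S_\G$, $x = s(\gamma)\in U^{-1}U$, and $\gamma\in U$. The graded inverse semigroup isomorphism from Step~2, together with the homeomorphism $\bar\rho_0$ of unit spaces from Step~1 (which must be checked to be compatible with it via restriction to idempotents), therefore induces a homeomorphism $\bar\rho:\G\to\Hs$; that it is a groupoid homomorphism follows because it respects the partially-defined product on germs, which in turn comes from $S_{\G,\star}\to S_{\Hs,\star}$ respecting products. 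Finally $d\circ\bar\rho = c$ is immediate from the fact that the inverse semigroup isomorphism was $\Gamma$-graded: a germ $[U,x]$ has cocycle value $\bar c(U)$, and this is preserved.

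**Main obstacle.** The crux is Step~2: pinning down an \emph{intrinsic} ring-theoretic description of the scalar multiples of indicator functions of homogeneous bisections — i.e. of the normaliser semigroup of $D_\G$ inside the graded ring $A_R(\G)$ — and proving that this description is exactly right (no spurious elements, and every $r1_U$ is captured). This is where topological principality of $c^{-1}(e)$ does the real work: it is what guarantees, as in Lemma~\ref{lem:masa}, that conjugation by such a normaliser acts on $D_\G$ in a way that rigidly reflects a partial homeomorphism of $\G^{(0)}$, so that the germ reconstruction recovers $\G$ on the nose rather than some quotient or extension. Handling the unit group $\units R$ bookkeeping cleanly, and making sure the neutrally-graded topological principality (rather than principality of all of $\G$) suffices, are the fiddly points.
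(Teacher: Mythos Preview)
Your overall strategy matches the paper's almost exactly: upgrade to $\rho(D_\G)=D_\Hs$ via the masa lemma, identify the homogeneous normalisers of $D$ ring-theoretically, pass to a quotient inverse semigroup isomorphic to $S_{\G,\star}$, and rebuild $\G$ as a groupoid of germs. Two points deserve correction, though.

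\textbf{Step 1 is circular as written.} You use maximality of $D_\Hs$ to conclude $\rho(D_\G)\subseteq D_\Hs$, which was the hypothesis, and then say ``running the same argument for $\rho^{-1}$ gives equality''---but the same argument for $\rho^{-1}$ would need $\rho^{-1}(D_\Hs)\subseteq D_\G$, which you do not have. The paper's fix (and the one you were groping for) is to use maximality on the \emph{source} side: Lemma~\ref{lem:masa} applied to $c^{-1}(e)$ shows $D_\G$ is maximal abelian in $A_R(\G)_e$, hence $\rho(D_\G)$ is maximal abelian in $A_R(\Hs)_e$; since $\rho(D_\G)$ is contained in the abelian ring $D_\Hs$, equality follows directly.

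\textbf{Step 2 needs a $*$-free formulation.} You write ``$nD_\G n^*\subseteq D_\G$'', but there is no involution on $A_R(\G)$ for a general integral domain $R$, so this is not a ring-theoretic condition. The paper handles this by defining a normaliser to be an element $n$ admitting a generalised inverse $m$ (so $mnm=m$, $nmn=n$) with $mDn\cup nDm\subseteq D$; Proposition~\ref{prp:standard form} then shows, using topological principality of $c^{-1}(e)$, that the homogeneous normalisers are exactly the $\units R$-valued functions supported on homogeneous compact open bisections. No minimality condition is needed; the quotient you allude to (``up to the $\units R$-ambiguity'') is implemented in Lemma~\ref{lem:S smaller} via the relation $f\sim h$ iff $f^*pf=h^*ph$ and $fpf^*=hph^*$ for all idempotents $p\in D$, where now $f^*$ is the explicit inverse in the inverse semigroup $\sN_\star(D)$ constructed \emph{a posteriori}.
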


\begin{remark}
In Theorem~\ref{thm:main}, we are regarding $A_R(\G)$ as a ring. In particular, even if
$R = \mathbb{C}$, so that $A_R(\G)$ and $A_R(\Hs)$ have natural $^*$-algebra structures,
the existence of a diagonal-preserving graded ring isomorphism $A_R(\G) \to A_R(\Hs)$ is
sufficient to guarantee isomorphism of the groupoids $\G$ and $\Hs$. {\it En passant} we
observe that it then follows that $A_R(\G)$ and $A_R(\Hs)$ are in fact isomorphic as
$^*$-algebras.
\end{remark}

\subsection{The normaliser of \texorpdfstring{$D_\G$}{D\_G}}
The first step in proving Theorem~\ref{thm:main} is to define and study what we call the
normalisers of $D_{\G}$. As discussed earlier, we often write $D$ instead of $D_\G$ for
the subalgebra $A_R(\G^{(0)})$ of $A_R(\G)$ . Our approach is based on Kumjian's work
\cite{Kumjian} on $C^*$-diagonals, and Renault's later work on Cartan subalgebras of
$C^*$-algebras \cite{Ren2008}. It is also related to Brown, Clark and an Huef's treatment
of Leavitt path algebras \cite{BCaH} which, in turn, is based on Brownlowe, Carlsen and
Whittaker's work on graph $C^*$-algebras \cite{BCW}; but we must make adjustments for the
lack of a $^*$-algebra structure here and to exploit the presence of a grading.

Let $\G$ be an ample Hausdorff groupoid, and let $c : \G \to \Gamma$ be a grading of $\G$
by a discrete group. Note that we have $D \subseteq A_R(\G)_e$, the
trivially-graded homogeneous subalgebra of $A_R(\G)$ since $c(u) = c(u^2) = c(u)^2$ for
each $u \in \G^{(0)}$.

We shall define what we call the graded normalisers of $D$, and show that they comprise
an inverse semigroup. Later we will establish that an appropriate quotient of this
inverse semigroup acts on the Stone spectrum of $D$ by partial homeomorphisms, and prove
that $\G$ is isomorphic to the groupoid of germs for this action. We are grateful to the
referee for suggesting a number of improvements in this section.

\begin{definition}
Let $A$ be a ring. For $n \in A$, let $V(n) := \{m \in A : mnm = m\text{ and }nmn = n\}$,
the set of \emph{generalised inverses} for $n$ in $A$. Note that  $mn$ and $nm$ are
idempotents for all $m\in V(n)$. Let $D$ be a commutative subring of $A$ that has local
units for $A$ (as described on page~\pageref{pg:localunits}. We say that $n \in A$ is a
\emph{normaliser} of $D$ if there exists $m \in V(n)$ such that
\begin{itemize}
\item[(N)] $m D n \cup n D m \subseteq D$.
\end{itemize}

In the grading setting, if $A = \bigoplus_{g \in \Gamma} A_g$ is a grading of $A$ by a discrete group
$\Gamma$, we say that a normaliser $n$ of $D$ is \emph{homogeneous} if $n \in c^{-1}(g)$
for some $g \in \Gamma$.
\end{definition}

\begin{notation}
If $A$ is a ring and $D \subseteq A$ is a commutative subring containing local units for
$A$, then we write $\sN_A(D)$, or just $\sN(D)$ for the collection of all normalisers of
$D$ in $A$. If $A$ is $\Gamma$-graded, we write $\sN_\star(D)$ for the collection of
homogeneous normalisers of $D$, and for $g \in \Gamma$, we write $\sN_g(D)$ for
$\sN_\star(G) \cap A_g$; we call the elements of $\sN_g(D)$ the \emph{$g$-homogeneous}
normalisers of $D$.
\end{notation}


Here, we are interested in the situation where $A = A_R(\G)$ is the Steinberg algebra of
an ample Hausdorff groupoid, $D$ is the diagonal subalgebra of locally constant
$R$-valued functions on $\G^{(0)}$, and the grading is given by a cocycle $c : \G \to
\Gamma$. Observe that then $D \subseteq A_R(\G)_e$.

Our first proposition, which is the linchpin of the paper, characterises the homogeneous
normalisers of $D$ when $c^{-1}(e)$ is topologically principal. We will use this in
Corollary~\ref{cor:nice normalisers} to show that $N_\star(D)$ is an inverse semigroup.
As suggested by the referee, we could probably deduce this from the general theory of
inverse semigroups by showing that $N_\star(D)$ is a regular semigroup with commuting
idempotents; but it will be important later to have the explicit description of elements
of $\sN_\star(D)$ given in the following proposition. Recall that $\units{R}$ denotes the
group of units of a ring $R$.

\begin{proposition}\label{prp:standard form}
Let $\G$ be an ample Hausdorff groupoid, and let $c : \G \to \Gamma$ be a grading of $\G$
by a discrete group. Let $R$ be a commutative ring with~1.
\begin{enumerate}
\item\label{it:normaliser forward} Let $U \subseteq c^{-1}(g)$ be a compact open
    local bisection. Suppose that $F$ is a finite collection of mutually disjoint
    compact open subsets $V$ of $U$ such that $U = \bigsqcup_{V \in F} V$. Fix units
    $a_V \in \units{R}\text{ for } V \in F$ and let
    \[
    n = \sum_{V \in F} a_V 1_V \quad\text{ and }\quad
    m = \sum_{V \in F} a^{-1}_V 1_{V^{-1}}.
    \]
    Then $n \in \sN(D)$, with $m \in V(n)$ satisfying (N).
\item\label{it:normaliser backward} Suppose that $R$ is an integral domain and
    $c^{-1}(e)$ is topologically principal. Suppose that $n \in \sN_g(D)$, and that
    $m \in V(n)$ satisfies (N). Then there exist a compact open local bisection $U
    \subseteq c^{-1}(g)$, a decomposition $U = \bigsqcup_{V \in F} V$ of $U$ into
    finitely many mutually disjoint compact open subsets, and units $a_V \in
    \units{R}$ indexed by $V \in F$ such that
    \[
    n = \sum_{V \in F} a_V 1_V \quad\text{ and }\quad
    m = \sum_{V \in F} a^{-1}_V 1_{V^{-1}}.
    \]
    We have $nm = 1_{r(U)}$ and $mn = 1_{s(U)}$.
\end{enumerate}
\end{proposition}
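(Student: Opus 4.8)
Part~\eqref{it:normaliser forward} is essentially bookkeeping. Using the identities $1_U * 1_V = 1_{UV}$ and $1_U^* = 1_{U^{-1}}$ from \cite[Proposition~4.5]{Ste10}, together with the fact that the $V \in F$ are mutually disjoint subsets of the local bisection $U$ (so that $V^{-1} W = \emptyset$ unless $V = W$, in which case $V^{-1}V = s(V)$ and $VV^{-1} = r(V)$), one computes directly that $nmn = n$ and $mnm = m$, so $m \in V(n)$. For condition (N), note that $E(D) = \{1_K : K \subseteq \G^{(0)} \text{ compact open}\}$ and that a general element of $D$ is an $R$-linear combination of such indicator functions; since $1_V * 1_K * 1_{V^{-1}}$ is a scalar multiple of $1_{r(V \cap s^{-1}(K))}$, which lies in $D$, expanding $m 1_K n$ and $n 1_K m$ over $V, W \in F$ and using disjointness shows both products lie in $D$. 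The homogeneity $U \subseteq c^{-1}(g)$ makes $n$ a $g$-homogeneous normaliser.

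Part~\eqref{it:normaliser backward} is the substantive direction, and the plan is as follows. First, write $n = \sum_{i} r_i 1_{U_i}$ with the $U_i$ mutually disjoint compact open homogeneous local bisections contained in $c^{-1}(g)$ and $r_i \in R \setminus \{0\}$, which is possible because $n \in A_R(\G)_g$. The heart of the argument is to show that $n$ is in fact supported on a \emph{single} local bisection, i.e.\ that $U := \supp(n)$ is a local bisection, and that $r_i \in \units{R}$ for each $i$. To see this, I would exploit condition (N): since $m \in V(n)$, the element $e := nm \in D$ is an idempotent, hence $e = 1_K$ for some compact open $K \subseteq \G^{(0)}$; similarly $mn = 1_{K'}$. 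The identity $nmn = n$ forces $\supp(n) \subseteq r^{-1}(K)$ and $n 1_{K'} = n$, so $\supp(n) \subseteq s^{-1}(K')$. Now I argue that $r|_{\supp(n)}$ is injective: if $\gamma_1 \ne \gamma_2$ lay in $\supp(n)$ with $r(\gamma_1) = r(\gamma_2)$, then because $c^{-1}(e)$ is topologically principal one can — after shrinking to small homogeneous bisections and translating by $m$ so as to land in $c^{-1}(e)$, exactly as in the proof of Lemma~\ref{lem:masa} — produce an element $a \in D$ detecting the difference, contradicting (N) (which constrains $n D m$ and $m D n$ to be diagonal, i.e.\ to have no "off-diagonal" isotropy-twisting terms). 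Symmetrically $s|_{\supp(n)}$ is injective, so $U := \supp(n)$ is a compact open local bisection, automatically contained in $c^{-1}(g)$ since $n$ is $g$-homogeneous.

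With $U$ a local bisection in hand, set $F := \{$ nonempty sets of the form $U \cap \{\gamma : n(\gamma) = r\}$, $r \in R\}$, a finite partition of $U$ into mutually disjoint compact open subsets with $n = \sum_{V \in F} a_V 1_V$ where $a_V \in R \setminus \{0\}$ is the constant value of $n$ on $V$. It remains to identify $m$ and to show $a_V \in \units{R}$. From $nmn = n$ and the bisection structure (so convolution along $U^{-1}U$ and $UU^{-1}$ collapses to pointwise multiplication on the unit space) one reads off, for each $\gamma \in U$, the scalar equation $n(\gamma)\, m(\gamma^{-1})\, n(\gamma) = n(\gamma)$ in $R$; since $R$ is an integral domain and $n(\gamma) \ne 0$, this gives $m(\gamma^{-1}) n(\gamma) = 1$, whence $a_V = n(\gamma) \in \units{R}$ and $m(\gamma^{-1}) = a_V^{-1}$ for $\gamma \in V$. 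This also shows $\supp(m) = U^{-1}$ and $m = \sum_{V \in F} a_V^{-1} 1_{V^{-1}}$. Finally $nm = \sum_{V} a_V a_V^{-1} 1_{VV^{-1}} = \sum_V 1_{r(V)} = 1_{r(U)}$ and likewise $mn = 1_{s(U)}$.

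\textbf{Main obstacle.} The delicate point is the injectivity of $r|_{\supp(n)}$ and $s|_{\supp(n)}$: this is where topological principality of $c^{-1}(e)$ must be used, and it requires carefully translating the potential "collision" of two points of $\supp(n)$ over a common range (or source) into an element of the isotropy of $c^{-1}(e)$, so that a generic point with trivial isotropy near that range (or source) furnishes, via multiplication by suitable elements of $D$ and by $m$, a violation of condition (N). Handling the non-$^*$ setting means one cannot simply take $m = n^*$; instead one must consistently use the given generalised inverse $m$ and the fact that $nm$, $mn$ are honest idempotents of $D$ to control $\supp(n)$ before the principality argument can be applied.
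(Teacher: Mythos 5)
Your part~(\ref{it:normaliser forward}) and the endgame of part~(\ref{it:normaliser backward}) match the paper's argument, but the central step of part~(\ref{it:normaliser backward}) --- proving that $\supp(n)$ is a local bisection with $\supp(m)=\supp(n)^{-1}$ --- is exactly what you defer to your ``main obstacle'' paragraph, and it is not something that follows ``exactly as in the proof of Lemma~\ref{lem:masa}.'' Two things make it genuinely harder. First, a collision (two distinct $\gamma_1,\gamma_2\in\supp(n)$ with the same source, say) happens at a unit that need not have trivial isotropy; topological principality of $c^{-1}(e)$ only supplies a dense set of such units, so one must transport the collision to a nearby generic point. The paper does this by choosing open bisection neighbourhoods of all elements of $\supp(n)u$ and $u\supp(m)$ on which $n$, resp.\ $m$, are constant, cutting them down to a common source/range set $Y\ni u$, and picking $y\in Y$ whose isotropy in $c^{-1}(e)$ is trivial; the grading is what guarantees that quotients $\mu^{-1}\nu$ of two elements of $\supp(n)y$ lie in $c^{-1}(e)$, so triviality of isotropy there forces distinct elements of $\supp(n)y$ to have distinct ranges (and of $y\,\supp(m)$ to have distinct sources). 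Second, to contradict (N) one must exhibit a \emph{single} surviving term: the value of $n\,1_X\,m$ at the non-unit $\eta_1\eta_0^{-1}$ is a convolution sum over factorisations, and without compressing by disjoint diagonal idempotents $1_{r(V'_{\eta_1})}$ and $1_{s(W'_{\zeta_0})}$ (whose disjointness is itself a consequence of the trivial isotropy at $y$) the terms could cancel; only once a single term $n(\eta_1)m(\eta_0^{-1})$ is isolated does the integral-domain hypothesis give a nonzero value off $\G^{(0)}$, contradicting $n\,1_X\,m\in D$. Your sketch names neither of these mechanisms, and since this bisection argument is the linchpin of the whole proposition, the proposal as written is an outline of the strategy rather than a proof.

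Two smaller points in your final step. The scalar identity should be read off from $nm=1_{r(\supp(n))}$ evaluated at $w=r(\gamma)$ (where the bisection property collapses the sum to $n(\gamma)m(\gamma^{-1})=1$), not from $nmn=n$ ``pointwise,'' which already presupposes control of $\supp(m)$. And $n(\gamma)m(\gamma^{-1})=1$ only gives $\supp(n)^{-1}\subseteq\supp(m)$; the equality $\supp(m)=\supp(n)^{-1}$ requires running the same support analysis symmetrically for $m$ (which is legitimate since $m$ is itself a homogeneous normaliser with $n\in V(m)$), as the paper does.
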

\begin{proof}
(\ref{it:normaliser forward}) Since $D = \lsp_R\{1_K : K \subseteq \G^{(0)}\text{ is
compact open}\}$, to see that $m$ satisfies~(N) it suffices to show that each $n 1_K m$
and each $m 1_K n$ is contained in $D$. For a compact open $K \subseteq \G^{(0)}$,
\[
n 1_K m = \sum_{V,W \in F} a_V a^{-1}_W 1_V 1_K 1_{W^{-1}}
    = \sum_{V, W \in F} a_V a^{-1}_W 1_{V K} 1_{(WK)^{-1}}.
\]
Since $s(V) \cap s(W) = \emptyset$ for distinct $V,W \in F$, it follows that $1_{VK}
1_{(WK)^{-1}} = 0$ unless $V = W$. So
\[
n 1_K m = \sum_{V \in F} a_V a^{-1}_V 1_{V K (VK)^{-1}}
    = \sum_{V \in F} 1_{r(VK)}
    = 1_{r(UK)}
\]
since the sets $V$ are mutually disjoint and cover $U$. Similarly, $m1_K n = 1_{s(KU)}$.
This establishes~(N).

Now, we show that $ m\in V(n)$. Applying the two identities just derived with $K = s(U)
\cup r(U)$ gives $nm = 1_{r(U)}$ and $mn = 1_{s(U)}$. Hence  $1_V mn = nm 1_V = 1_V$ for
each $V\in F$. So
\[
nmn = \sum_{V \in F} a_V 1_V 1_{s(U)}
    = \sum_{V \in F} a_V 1_{V s(U)}
    = \sum_{V \in F} a_V 1_V = n,
\]
and similarly $mnm = m$.

(\ref{it:normaliser backward}) Let $U_n := \supp(n) \subseteq c^{-1}(g)$. Since $R$ is an
integral domain, $\supp(m)\supp(n) = \supp(mn) \subseteq \G^{(0)} \subseteq \G_e$, and so
$U_m := \supp(m)$ is a subset of $c^{-1}(g^{-1})$. The sets $U_n$ and $U_m$ are compact
open sets because $n,m \in A_R(\G)$. We will show that $U_n$ is a local bisection and
that $U_m = U_n^{-1}$. First observe that $mn$ is an idempotent in $D = A_R(\G^{(0)})$.
Since $R$ is an integral domain, it has no nontrivial idempotents, and since
multiplication in $D$ is pointwise, we conclude that $mn = 1_K$ for some compact open set
$K$. We claim that $K = s(U_n)$. For this, first suppose that $u \in K$. Then
\[
0 \not= mn(u)
    = \sum_{\alpha\beta = u} m(\alpha)n(\beta)
    = \sum_{s(\alpha) = u} m(\alpha^{-1})n(\alpha).
\]
So $n(\alpha) \not= 0$ for some $\alpha \in \G_u$, giving $u \in s(\supp(n)) = s(U_n)$.
Hence $K \subseteq s(U_n)$. For the reverse inclusion, suppose that $u \in s(U_n)$; that
is, there exists $\alpha \in U_n$ satisfying $u = s(\alpha)$. Then
\[
0 \not= n(\alpha) = nmn(\alpha) = \sum_{\beta\gamma = \alpha} n(\beta)mn(\gamma) = \sum_{\beta\gamma = \alpha} n(\beta)1_K(\gamma).
\]
Since $K \subseteq \G^{(0)}$, the rightmost sum in the preceding equation collapses to
$n(\alpha)1_K(u)$. Moreover, as it is nonzero, we conclude that $u \in K$, and then, $mn
= 1_{s(U_n)}$. The same argument applied to the homogeneous normaliser $m$ shows that $nm
= 1_{s(U_m)}$. Similarly, $nm = 1_{r(U_n)}$ and $mn = 1_{r(U_m)}$. Therefore, $s(U_n) =
r(U_m)$ and $s(U_m) = r(U_n)$.

Now, fixing $u \in s(U_n)$, we show that the set $U_n u U_m$ consists of a single element
of $\G^{(0)}$. Since
\[
0 \not= 1_{s(U_n)}(u) = mn(u) = \sum_{\alpha \beta = u} m(\alpha)n(\beta) = \sum_{s(\alpha) = u} m(\alpha^{-1})n(\alpha),
\]
there exists $\alpha_0 \in U_n u$ such that $\alpha_0^{-1} \in U_m$; we must show that
$U_n u U_m = \{r(\alpha_0)\}$. Since inversion in $\G$ is a homeomorphism, and since
$n,m$ are locally constant, we can choose an open local bisection $V^0_{\alpha_0}$ such
that $\alpha_0 \in V^0_{\alpha_0} \subseteq U_n$, such that $(V^0_{\alpha_0})^{-1}
\subseteq U_m$, and such that $n$ is constant on $V^0_{\alpha_0}$ and $m$ is constant on
$(V^0_{\alpha_0})^{-1}$. Let $W^0_{\alpha_0^{-1}} := (V^0_{\alpha_0})^{-1}$. The sets
$U_n u \setminus \{\alpha_0\}$ and $u U_m \setminus \{\alpha_0^{-1}\}$ are finite and
discrete because $r$ and $s$ are local homeomorphisms and $U_n$ and $U_m$ are compact.
For each $\alpha \in U_n u \setminus \{\alpha_0\}$, choose an open local bisection
$V^0_\alpha$ with $\alpha \in V^0_\alpha \subseteq U_n$ such that $n$ is constant on
$V^0_\alpha$; and for each $\beta \in u U_m \setminus \{\alpha_0^{-1}\}$ choose an open
local bisection $\beta \in W^0_\beta \subseteq U_m$ with $m$ constant on $W^0_\beta$. The
set $Y := \Big(\bigcap_{\alpha \in U_n} s(V^0_\alpha)\Big) \cap \Big(\bigcap_{\beta \in u
U_m} r(W^0_\beta)\Big)$ is open. For each $\alpha \in U_n u$, put $V_\alpha := V^0_\alpha
Y$, and for each $\beta \in u U_m$, let $W_\beta := Y W^0_\beta$. Then:
\begin{itemize}
\item $W_{\alpha_0^{-1}} = V_{\alpha_0}^{-1}$;
\item each $V_\alpha$ is an open local bisection containing $\alpha$, and each
    $W_\beta$ is an open local bisection containing $\beta$;
\item $n$ is constant on each $V_\alpha$ and $m$ is constant on each $W_\beta$; and
\item $s(V_\alpha) = Y = r(W_\beta)$ for all $\alpha,\beta$.
\end{itemize}

As $U_n \subseteq c^{-1}(g)$ and $U_m \subseteq c^{-1}(g^{-1})$, we have $V_\alpha
W_\beta \subseteq c^{-1}(e)$ for all $\alpha,\beta$. Because $Y$ is open, and since
$c^{-1}(e)$ is topologically principal, we can find $y \in Y$ such that $\G^y_y \cap
c^{-1}(e) = \{y\}$. It follows that if $\mu,\nu \in U_n y$ are distinct then $r(\mu)
\not= r(\nu)$ (because otherwise $\mu^{-1}\nu$ would belong to $\G^y_y \cap c^{-1}(e)
\setminus \{y\}$), and similarly, if $\mu,\nu \in yU_n$ are distinct then $s(\mu) \not=
s(\nu)$. So we can choose a compact open neighbourhood $V'_\eta$ of each $\eta \in U_n y$
and a compact open neighbourhood $W'_\zeta$ of each $\zeta \in y U_m$ such that the
$r(V'_\eta)$ are mutually disjoint and the $s(W'_\zeta)$ are mutually disjoint; and we
can assume that $V'_\eta \subseteq V_\alpha$ if $\eta \in V_\alpha$, and similarly for
the $W'_\zeta$. Let $X := \big(\bigcap_{\eta \in U_n y} s(V'_\eta)\big) \cap
\big(\bigcap_{\zeta \in yU_m} r(W'_\zeta)\big)$. Since $1_{r(V'_\eta)} 1_{r(V'_{\eta'})}
= 0$ for distinct $\eta, \eta' \in U_n y$, and $1_{s(W'_{\zeta'})}1_{s(W'_\zeta)} = 0$
for distinct $\zeta,\zeta' \in yU_m$, we have
\begin{equation}\label{eq:off-diagonal}
(1_{r(V'_\eta)} n 1_X m 1_{s(W'_\zeta)})(\eta\zeta) = n(\eta)m(\zeta)
\end{equation}
for each $\eta \in U_n y$ and $\zeta \in yU_m$. Now we suppose, that $U_n u U_m$ is not a
singleton, and derive a contradiction. Either there exists $\alpha_1 \in U_n u \setminus
\{\alpha_0\}$ or there is $\beta_1 \in u U_m \setminus \{\alpha_0^{-1}\}$. We consider
the former case; the latter is similar. Let $\eta_0$ and $\eta_1$ be the unique elements
of $V_{\alpha_0} y$ and $V_{\alpha_1} y$. Since $W_{\alpha_0^{-1}} = V_{\alpha_0}^{-1}$,
the unique element of $y W_{\alpha_0^{-1}}$ is $\zeta_0 := \eta_0^{-1}$. As $m,n$
satisfy~(N), we have $n 1_X m \in D$ and therefore $1_{r(V'_{\eta_1})} n 1_X m
1_{s(W'_{\zeta_0})} \in D$. But~\eqref{eq:off-diagonal} and that $n$ is constant on
$V_{\alpha_1}$ and $m$ is constant on $W_{\alpha_0^{-1}}$ gives
\[
\big(1_{r(V'_{\eta_1})} n 1_X m 1_{s(W'_{\zeta_0})}\big)(\eta_1\zeta_0) = n(\eta_1)m(\zeta_0) = n(\alpha_1)m(\alpha^{-1}_0).
\]
We have $n(\alpha_1) \not= 0$ and $m(\alpha_0^{-1}) \not= 0$ because $\alpha_1 \in U_n =
\supp(n)$ and $\alpha^{-1}_0 \in U_m = \supp(m)$; since $R$ is an integral domain, we
deduce that $\big(1_{r(V'_{\eta_1})} n 1_X m 1_{s(W'_{\zeta_0})}\big)(\eta_1\zeta_0)
\not= 0$. Since $\zeta_0 = \eta_0^{-1} \not= \eta_1^{-1}$, we have $\eta_1 \zeta_0 \not
\in \G^{(0)}$, which contradicts $1_{r(V'_{\eta_1})} n 1_X m 1_{s(W'_{\zeta_0})} \in D$.

We have now established that $U_n$ is a local bisection, and symmetry shows that $U_m$ is
a local bisection. We also showed at the beginning of the preceding paragraph that if $u
\in s(U_n)$ then there exists $\alpha \in U_n u$ such that $\alpha^{-1} \in U_m$. Since
we now also know that $U_n u$ is a singleton, we deduce that $U_n^{-1} \subseteq U_m$;
and symmetry implies that in fact $U_m = U_n^{-1}$. Since $n$ and $m$ are locally
constant, we can express $U_n = \bigsqcup_{V \in F} V$ where the $V$ are mutually
disjoint compact open sets such that $n$ is constant (and nonzero) on $V$ and $m$ is
constant on $V^{-1}$ for each $V \in F$; say $n \equiv r_V$ on $V$ and $m \equiv r'_V$ on
$V^{-1}$. So $n = \sum_{V \in F} r_V 1_V$ and $m = \sum_{V \in F} r'_V 1_{V^{-1}}$. We
just have to show that each $r_V r'_V = 1$. Since $U$ is a local bisection, the sets
$r(V)$ are mutually disjoint, and so for $w \in r(U)$ there is a unique $V$ such that $w
\in r(V)$. We then have
\[
nm(w) = \sum_{r(\alpha) = w} n(\alpha)m(\alpha^{-1})
    = r_V r'_{V^{-1}}.
\]
We have $w \in r(U_n)$ and we proved that $nm = 1_{r(U_n)}$. Hence $r_V r'_{V^{-1}} = 1$
as required.
\end{proof}

\begin{remark}\label{rmk:normalisers}
\begin{enumerate}
\item If $U \subseteq c^{-1}(g)$ is a compact open local bisection, then $1_U \in
    \sN_g(D)$, with $m = 1_{U^{-1}}\in V(1_U )$ satisfying (N). Hence $A_R(\G) =
    \lsp_R(\sN_\star(D))$.
\item Since $D$ has local units for $A_R(G)$ (see page~\pageref{pg:localunits}), if
    $n \in \sN(D)$, with $m \in V(n)$ satisfying (N), then $nm, mn \in D$.
\end{enumerate}
\end{remark}

\subsection{The inverse semigroup \texorpdfstring{$\sN_\star(D)$}{ of homogeneous normalisers}}

We now show that $\sN_\star(D)$ is an inverse semigroup carrying a $\Gamma$-grading. In
particular, we will show that a quotient $\sN_\star(D)/{\sim}$ of this inverse semigroup
acts on the Stone spectrum $\widehat{D}$, so that we can construct from it a
$\Gamma$-graded groupoid of germs $(\sN_\star(D)/{\sim}) \times_\varphi \widehat{D}$.

We write $\Clc{\G}{R}$ for the set $\{f \in A_R(\G) : f(\G) \subseteq \units{R} \cup
\{0\}\}$ of functions in $A_R(\G)$ whose nonzero values are units. We define $^* :
\Clc{\G}{R} \to \Clc{\G}{R}$ by
\begin{equation}\label{eq:star op}
f^*(\gamma)
    = \begin{cases}
        f(\gamma^{-1})^{-1} &\text{ if $f(\gamma^{-1}) \not= 0$}\\
        0 &\text{ otherwise.}
    \end{cases}
\end{equation}
It will be convenient to write $r^* := r^{-1}$ for $r \in \units{R}$ and $0^*: = 0$. Under
this notation, we have $f^*(\gamma) = f(\gamma^{-1})^*$ for all $f \in \Clc{\G}{R}$ and
$\gamma \in \G$.

We define
\[
\Clc{\G}{R}_\star := \{f \in \Clc{\G}{R} : f\text{ is homogeneous}\},
\]
and for $g \in \Gamma$, we write $\Clc{\G}{R}_g := \Clc{\G}{R} \cap A_R(\G)_g$.

We can reinterpret Proposition~\ref{prp:standard form} as follows.

\begin{corollary}\label{cor:nice normalisers}
Let $\G$ be an ample Hausdorff groupoid, and let $c : \G \to \Gamma$ be a grading of $\G$
by a discrete group. Suppose that $c^{-1}(e)$ is topologically principal. Let $R$ be a
commutative integral domain with~1. Then
\begin{equation}\label{eq:S def}
\sN_\star(D) =  \{f \in \Clc{\G}{R}_\star : \supp(f)\text{ is a local bisection}\},
\end{equation}
and the map $f \mapsto f^*$ is an involution on $\sN_\star(D)$ satisfying $f^*f =
1_{s(\supp(f))}$ and $ff^* = 1_{r(\supp(f))}$ for $f \in \sN_\star(D)$. Moreover,
$\sN_\star(D)$ is an inverse semigroup, and there is a $\Gamma$-grading $\tilde{c} :
\sN_\star(D) \setminus \{0\} \to \Gamma$ defined by $\tilde{c}(f) = g$ if and only if $f
\in \sN_g(D)$.
\end{corollary}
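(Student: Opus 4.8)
The plan is to read everything off Proposition~\ref{prp:standard form}, which already contains all the substantive work; what remains is essentially bookkeeping, together with one small observation about idempotent local bisections.

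First I would establish the set equality~\eqref{eq:S def}. For the inclusion ``$\subseteq$'', take $n \in \sN_\star(D)$, say $n \in \sN_g(D)$, and let $m \in V(n)$ be an element witnessing condition~(N); then Proposition~\ref{prp:standard form}(\ref{it:normaliser backward}) writes $n = \sum_{V \in F} a_V 1_V$ with $\supp(n) = \bigsqcup_{V \in F} V$ a compact open local bisection contained in $c^{-1}(g)$ and each $a_V \in \units{R}$, so $n \in \Clc{\G}{R}_g$ and has local-bisection support. For ``$\supseteq$'', take $f \in \Clc{\G}{R}_g$ with $\supp(f)$ a local bisection; then $\supp(f) = \bigsqcup_{r} f^{-1}(r)$ is a finite disjoint union of compact open subsets of the local bisection $\supp(f)$, indexed by the finitely many $r \in \units{R}$ with $f^{-1}(r) \ne \emptyset$, so $f = \sum_r r\,1_{f^{-1}(r)}$ is in the standard form of Proposition~\ref{prp:standard form}(\ref{it:normaliser forward}), which yields $f \in \sN(D)$; homogeneity then gives $f \in \sN_g(D) \subseteq \sN_\star(D)$. (The zero function lies in both sides.)

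Next, the involution. Writing $f \in \sN_\star(D)$ in standard form $f = \sum_{V \in F} a_V 1_V$ with $U := \supp(f) = \bigsqcup_{V \in F} V$, a direct inspection of~\eqref{eq:star op} shows $f^* = \sum_{V \in F} a_V^{-1} 1_{V^{-1}}$, i.e. $f^*$ is exactly the element $m$ produced in Proposition~\ref{prp:standard form}. Hence $f^* \in V(f)$ satisfies~(N), its support $U^{-1}$ is a compact open local bisection in $c^{-1}(g^{-1})$ so $f^* \in \sN_{g^{-1}}(D)$, and the identities $nm = 1_{r(U)}$, $mn = 1_{s(U)}$ of that proposition give $ff^* = 1_{r(\supp f)}$ and $f^* f = 1_{s(\supp f)}$. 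Since $(f^*)^* = f$ is immediate and $(f*f')^* = f'^* * f^*$ is a short computation (each convolution value of a product of two local-bisection-supported functions is a single product $f(\alpha)f'(\beta)$), the map $f \mapsto f^*$ is an involution on $\sN_\star(D)$.

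For the inverse semigroup claim I would first check closure under convolution: for $f \in \sN_g(D)$ and $f' \in \sN_{g'}(D)$ we have $\supp(f*f') \subseteq \supp(f)\supp(f')$, a compact open local bisection inside $c^{-1}(gg')$, and on it every value of $f*f'$ is a single product of units, so $f*f' \in \Clc{\G}{R}_{gg'}$ with local-bisection support, whence $f*f' \in \sN_\star(D)$ by~\eqref{eq:S def}. Thus $\sN_\star(D)$ is a regular semigroup (each $f$ has generalised inverse $f^*$) closed under $^*$, and by the standard structure theorem for regular semigroups it is an inverse semigroup provided its idempotents commute. This is the one point needing an argument, and it is where I expect the only real (albeit small) obstacle: if $f = f*f \in \sN_\star(D)$ then, as $R$ is a domain, $U := \supp(f)$ satisfies $UU = U$, and a short calculation --- for $\alpha \in U = UU$, injectivity of $r|_U$ forces $s(\alpha) \in U$ and then injectivity of $s|_U$ forces $\alpha = s(\alpha)$ --- shows $U \subseteq \G^{(0)}$; then $a_V^2 = a_V$ gives $a_V = 1$ on each piece, so $f = 1_U \in E(D)$, and $E(D)$ is commutative since $D$ multiplies pointwise. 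Hence $\sN_\star(D)$ is an inverse semigroup with $f^*$ the unique generalised inverse of $f$. Finally, a nonzero $f \in \sN_\star(D)$ has nonempty support lying in exactly one $c^{-1}(g)$, so $\tilde c(f) := g$ (equivalently $f \in \sN_g(D)$) is well defined on $\sN_\star(D) \setminus \{0\}$, and $f*f' \ne 0$ forces $\supp(f*f') \subseteq c^{-1}(\tilde c(f))c^{-1}(\tilde c(f')) \subseteq c^{-1}(\tilde c(f)\tilde c(f'))$, so $\tilde c(f*f') = \tilde c(f)\tilde c(f')$; thus $\tilde c$ is a $\Gamma$-grading. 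Everything else is a direct consequence of Proposition~\ref{prp:standard form} and the definitions.
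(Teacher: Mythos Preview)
Your proof is correct and follows essentially the same route as the paper's: both read the set equality~\eqref{eq:S def} directly off Proposition~\ref{prp:standard form}, identify $f^*$ with the element $m$ appearing there, verify the involution identities by direct computation on local-bisection-supported functions, check closure under convolution, and conclude that $\sN_\star(D)$ is an inverse semigroup via commuting idempotents (invoking \cite[Theorem~1.3]{Law98}). The only noteworthy difference is that you spell out why an idempotent $f \in \sN_\star(D)$ must have $\supp(f) \subseteq \G^{(0)}$ (your $UU=U$ argument), whereas the paper simply asserts that idempotents have the form $1_K$; and for the grading the paper factors through the support map $\sN_\star(D) \to S_{\G,\star}$ while you argue directly---but these are cosmetic variations on the same idea.
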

\begin{proof}
Proposition~\ref{prp:standard form} gives~(\ref{eq:S def}) holds. If $f \in
\sN_\star(D)$, then clearly $f^*\in \sN_\star(D)$ and $f^{**} = f$. Since $R$ is an
integral domain, for $f,g \in \sN_\star(D)$, and $\gamma \in \G$, we have $\gamma \in
\supp (fg)$ if and only if $\gamma \in \supp (f)  \supp (g)$; and in this case, there are
unique elements $\alpha \in \supp (f)$ and $\beta \in \supp (g)$ such that $\gamma =
\alpha \beta$. If $\gamma \in \supp (g^*) \supp (f^*)$, it follows that
\begin{align*}
(fg)^*(\gamma)
    = (fg)(\gamma^{-1})^{-1}
    &=  (f(\eta^{-1})g(\xi^{-1}))^{-1}\\
    &= g(\xi^{-1})^{-1}f(\eta^{-1})^{-1}
    =  g^*(\xi)f^*(\eta)= (g^* f^*) (\gamma ),
\end{align*}
where $\xi\in \supp (g^*)$ and $\eta \in \supp (f^*)$ are the unique elements such that
$\gamma = \xi \eta$. On the other hand, if $\gamma \notin \supp (g^*) \supp (f^*)$, then
$(fg)^* (\gamma ) = 0= (g^*)(f^*) (\gamma )$. So $^*$ is an involution on $\sN_\star(D)$.
(We did not need commutativity of $R$ for this computation.)

If $f \in \sN_\star(D)$ and $\gamma \in \G$, then the displayed equation above shows that
$(ff^*)(\gamma ) =0$ except when $\gamma \in \supp (f) \supp (f)^{-1}=r(\supp (f))$,
because $\supp (f)$ is a local bisection. Moreover, if $\gamma = \alpha \alpha^{-1} =
r(\alpha)$ for $\alpha \in \supp (f)$, then $(ff^*)(\gamma) = f(\alpha ) f(\alpha^{-1}
)^{-1} =1$. Hence $ff^*= 1_{r(\supp (f))}$. An identical calculation shows that $f^*f =
1_{s(\supp(f))}$. It follows immediately that $ff^*f = f$ and $f^*ff^* = f^*$, so that
$f^*\in V(f)$. For $K \subseteq \G^{(0)}$ compact open, we have $f 1_K = f|_{\supp(f)K}
\in \sN_\star(D)$, and likewise $1_K f \in \sN_\star(D)$; thus, $f 1_K f^* =
(f1_K)(1_Kf^*)$ and $f^* 1_K f = (1_K f)^* (1_K f)$ belong to $D$ by the reasoning just
applied to $ff^*$ and $f^*f$. Since $D$ is the $R$-linear span of the $1_K$, it follows
that $f D f^* \cup f^* D f \subseteq D$. So we have established that the pair $(f, f^*)$
satisfies~(N). This implies that the map $f \mapsto f^*$ is an involution on
$\sN_\star(D)$ satisfying the desired properties.

To see that $\sN_\star(D)$ is an $\Gamma$-graded inverse semigroup, fix $f_1, f_2 \in
\sN_\star(D)$. Each $\gamma \in \supp(f_1f_2)$ can be factorised uniquely as $\gamma =
\gamma_1 \gamma_2$ with $\gamma_1 \in \supp(f_1)$ and $\gamma_2 \in \supp(f_2)$. Since
$\units{R}$ is closed under multiplication,
\[
(f_1f_2)(\gamma) = f_1(\gamma_1)f_2(\gamma_2) \in \units{R} \cdot \units{R} \in \units{R} \cup \{0\}.
\]
So $\sN_\star(D)$ is closed under multiplication. The set $\sN_\star(D)$ is clearly
closed under $^*$, and $^*$ is an involution on $\sN_\star(D)$ such that $ff^*f = f$ and
$f^*ff^* = f^*$ by the preceding paragraph. The idempotents of $\sN_\star(D)$ are of the
form $1_K$, where $K$ is an open compact subset of $\G^{(0)}$. So they commute, and hence
$\sN_\star(D)$ is an inverse semigroup by \cite[Theorem~1.3]{Law98}.

Let $S_{\G, \star}$ be the $\Gamma$-graded inverse semigroup of homogeneous compact open local
bisections of $\G$, with grading $\bar{c}$ as established in Section~\ref{subset:GSAs}. Since $R$ is
an integral domain, if $f_1, f_2 \in \sN_\star(D)$, then $\supp(f_1f_2) =
\supp(f_1)\supp(f_2)$. Hence $f_1f_2 = 0$ if and only if $\supp(f_1)\supp(f_2) =
\emptyset$. Since $\tilde{c}(f) = \bar{c}(\supp(f))$ for all $f$, it follows that
$\tilde{c}$ is a $\Gamma$-grading.
\end{proof}

\subsection{From \texorpdfstring{$\sN_\star(D)$}{homogeneous normalisers} to the inverse semigroup of compact open local bisections}

The collection $S_{\G, \star}$ of homogeneous compact open local bisections (including
the empty local bisection) of $\G$ forms an inverse semigroup under composition and with
involution $U \mapsto U^{-1}$. We now pass from the inverse semigroup $\sN_\star(D)$
described in the preceding section to a quotient $\sN_\star(D)/{\sim}$, which we prove is
isomorphic to $S_{\G, \star}$.

We write $E(D)$ for the boolean ring of idempotent elements of $D$.

\begin{lemma}\label{lem:S smaller}
Let $\G$ be an ample Hausdorff groupoid, and let $c : \G \to \Gamma$ be a grading of $\G$
by a discrete group. Suppose that $c^{-1}(e)$ is topologically principal. Let $R$ be a
commutative integral domain with~1, and let $\tilde{c} : \sN_\star(D) \setminus \{0\} \to
\Gamma$ be the $\Gamma$-grading of Corollary~\ref{cor:nice normalisers}. Then there is an
equivalence relation $\sim$ on $\sN_\star(D)$ defined by $f \sim h$ if and only if all of
the following three conditions are satisfied:
\begin{enumerate}
\item $\tilde{c}(f) = \tilde{c}(h)$,
\item\label{it:sim2} $f^*pf = h^*ph$ for every $p \in E(D)$, and
\item\label{it:sim3} $fpf^* = hph^*$ for every $p \in E(D)$.
\end{enumerate}
Moreover, the set $\sN_\star(D)/{\sim}$ is an inverse semigroup under the operations
$[f][g] = [fg]$ and $[f]^* = [f^*]$. The map $q : \sN_\star(D) \to S_{\G, \star}$ given
by $q(f) = \supp(f)$ induces an isomorphism of inverse semigroups $\tilde{q} :
\sN_\star(D)/{\sim} \to S_{\G,\star}$.
\end{lemma}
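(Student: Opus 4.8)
The plan is to identify $\sim$ with the kernel congruence of the map $q\colon\sN_\star(D)\to S_{\G,\star}$, $q(f)=\supp(f)$. Once that is done, the assertions that $\sim$ is an equivalence relation, that $\sN_\star(D)/{\sim}$ is an inverse semigroup under $[f][g]=[fg]$ and $[f]^*=[f^*]$, and that $f\mapsto\supp(f)$ descends to an isomorphism $\tilde q$, are all instances of the standard fact that a surjective homomorphism of inverse semigroups induces an isomorphism of the quotient by its kernel congruence onto its image (see, e.g., \cite{Law98}). So the lemma reduces to two things: that $q$ is a well-defined surjective homomorphism, and that $\ker q={\sim}$, i.e.\ that for $f,h\in\sN_\star(D)$ one has $f\sim h$ if and only if $\supp(f)=\supp(h)$.

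The first point is quick. By Corollary~\ref{cor:nice normalisers}, $\supp(f)$ is a homogeneous compact open local bisection for every $f\in\sN_\star(D)$, so $q$ does map into $S_{\G,\star}$; since $R$ is an integral domain we have $\supp(fg)=\supp(f)\,\supp(g)$ and $\supp(f^*)=\supp(f)^{-1}$ (both recorded in the proof of Corollary~\ref{cor:nice normalisers}), so $q$ is a homomorphism of inverse semigroups, and it intertwines $\tilde c$ with the grading $\bar c$ because $\tilde c(f)=\bar c(\supp(f))$; finally $q$ is surjective because, by Remark~\ref{rmk:normalisers}(1), $1_U\in\sN_\star(D)$ and $q(1_U)=U$ for every $U\in S_{\G,\star}$.

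For the ``if'' direction of $\ker q={\sim}$ I would simply compute. Writing $f=\sum_{V\in F}a_V1_V$ in the standard form of Proposition~\ref{prp:standard form}, so that $\supp(f)=\bigsqcup_{V\in F}V$ and $f^*=\sum_{V\in F}a_V^{-1}1_{V^{-1}}$, and using that the $V$ are mutually disjoint subsets of the local bisection $\supp(f)$, one finds for each compact open $K\subseteq\G^{(0)}$ that
\[
f^*1_Kf=1_{s(\supp(f)\cap r^{-1}(K))}\qquad\text{and}\qquad f1_Kf^*=1_{r(\supp(f)\cap s^{-1}(K))}.
\]
Since $E(D)=\{1_K:K\subseteq\G^{(0)}\text{ compact open}\}$ and $\tilde c(f)=\bar c(\supp(f))$, the three conditions defining $\sim$ all hold as soon as $\supp(f)=\supp(h)$.

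The ``only if'' direction is where the hypothesis that $c^{-1}(e)$ is topologically principal enters, and this is the step I expect to be the real work. Assume $f\sim h$ and put $U=\supp(f)$, $W=\supp(h)$. Condition~(1) forces $U,W\subseteq c^{-1}(g)$ for a common $g\in\Gamma$. Feeding the formulas just displayed compact open neighbourhoods $K$ of points, and using that $\G^{(0)}$ is Hausdorff with a basis of compact open sets (so that the compact open neighbourhoods of a point intersect to that point), one extracts from conditions~(2) and~(3) that $s(U)=s(W)$, that $r(U)=r(W)$, and that $r(\gamma)=r(\delta)$ whenever $\gamma\in U$ and $\delta\in W$ satisfy $s(\gamma)=s(\delta)$; in other words, the source-to-range homeomorphisms of the bisections $U$ and $W$ coincide. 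Now suppose $U\neq W$. By symmetry of $\sim$ we may assume $U\setminus W\neq\emptyset$; since $U$ is open and $W$ is compact, hence closed, the set $U\setminus W$ is open, and therefore so is $s(U\setminus W)$, a nonempty open subset of $\G^{(0)}$. Given $x\in s(U\setminus W)$, pick $\gamma\in U\setminus W$ with $s(\gamma)=x$ and let $\delta$ be the unique element of $W$ with $s(\delta)=x$; then $\gamma\neq\delta$ (else $\gamma\in W$), while $r(\gamma)=r(\delta)$ and $c(\gamma^{-1}\delta)=g^{-1}g=e$, so $\gamma^{-1}\delta\in\G^x_x\cap c^{-1}(e)\setminus\{x\}$. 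Thus every unit of the nonempty open set $s(U\setminus W)$ has non-trivial isotropy inside $c^{-1}(e)$, contradicting topological principality of $c^{-1}(e)$. Hence $U=W$, which completes the proof that $\ker q={\sim}$ and hence the lemma.
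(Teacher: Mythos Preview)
Your proof is correct and follows essentially the same strategy as the paper: both reduce the lemma to showing that $q(f)=\supp(f)$ is a surjective inverse-semigroup homomorphism whose kernel congruence coincides with $\sim$, compute $f^*1_Kf=1_{s(\supp(f)\cap r^{-1}(K))}$ for the easy direction, and invoke topological principality of $c^{-1}(e)$ for the hard one. The only organisational difference is in the hard direction: the paper runs a density argument (for each nonempty open subset of $\supp(f)$ it picks a point with trivial isotropy at its range and uses condition~(\ref{it:sim3}) directly to force that point into $\supp(h)$), whereas you first extract from conditions~(\ref{it:sim2})--(\ref{it:sim3}) that the partial homeomorphisms $s(U)\to r(U)$ and $s(W)\to r(W)$ coincide, and then observe that every unit in the open set $s(U\setminus W)$ would carry nontrivial $c^{-1}(e)$-isotropy. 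Your variant separates the algebraic consequence of $\sim$ from the topological contradiction a little more cleanly, but the underlying idea is the same.
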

\begin{proof}
It suffices to show that $q : \sN_\star(D) \to S_{\G,\star}$ is a surjective semigroup
homomorphism, and that $q(f) = q(h)$ if and only if $f \sim h$.

Since $R$ is an integral domain, we have $\supp(fh) = \supp(f)\supp(h)$ for all $f,h \in
\sN_\star(D)$, and we have $\supp(f^*) = \supp(f)^{-1}$ by definition of $^*$ on
$\sN_\star(D)$. So the map $q$ is a semigroup homomorphism. It is surjective because each
homogeneous compact open local bisection $V$ satisfies $V = q(1_V)$.

To see that $q(f) = q(h) \iff f \sim h$, first suppose that $q(f) = q(h)$. Then $\supp(f)
= \supp(h)$. Since $\tilde{c}(f) = g$ if and only if $\supp(f) \subseteq c^{-1}(g)$, it
follows that $\tilde{c}(f) = \tilde{c}(h)$. If $p \in D$ is an idempotent, then $p = 1_K$
for some compact open $K \subseteq \G^{(0)}$. We then have $pf = f|_{K\supp(f)}$ and $ph
= h|_{K\supp(h)} = h|_{K\supp(f)}$. Thus, Corollary~\ref{cor:nice normalisers} applied to
$pf, ph \in \sN_\star(D)$ shows that
\begin{align*}
f^*pf &= (pf)^*(pf) = 1_{s(K\supp(f))} = 1_{s(Kq(f))}\\
    &= 1_{s(Kq(h))} = 1_{s(K\supp(h))} = (ph)^*(ph) = h^*ph,
\end{align*}
and similarly $fpf^* = hph^*$.

Now suppose $f \sim h$. We must show that $\supp(f) = \supp(h)$. Since $\tilde{c}(f) =
\tilde{c}(h)$, we have $\supp(f) \supp(h)^{-1} \subseteq c^{-1}(e)$ which is
topologically principal. Putting $p = 1_{r(\supp(f)) \cup r(\supp(h))}$
in~(\ref{it:sim2}), we see that $f^*f = h^*h$, and so $s(\supp(f)) = s(\supp(h))$. We
will show that $\supp (f) \cap \supp (h)$ is dense in $\supp (f)$. Let $U$ be a non-empty
open subset of $\G$ contained in $\supp (f)$. Since $c^{-1}(e)$ is topologically
principal and $r(U)$ is open, there is $\alpha \in U$ such that the isotropy at
$r(\alpha)$ is trivial. Let $\beta \in \supp(h)$ be the unique element with $s(\alpha) =
s(\beta)$. We suppose that $\beta\not= \alpha$ to derive a contradiction. Since the
isotropy at $r(\alpha)$ is trivial, and $\beta \not=\alpha$, we have $r(\beta) \not=
r(\alpha)$; thus, there exist disjoint compact open sets $V,W \subseteq \G^{(0)}$ such
that $r(\alpha) \in V$ and $r(\beta) \in W$. Let $X = s(V \supp(f)) \cap s(W \supp(h))$.
Then $(f 1_X f^*)(r(\alpha)) = 1$ and $(h 1_X h^*)(r(\alpha)) = 0$,
contradicting~(\ref{it:sim3}). This shows that $\supp (f) \cap \supp (h)$ is dense in
$\supp (f)$. Since $\G$ is Hausdorff, the compact sets $\supp(f)$ and $\supp(h)$ are
closed, and so $\supp (f) \cap \supp (h)$ is closed. Hence $\supp (f) \cap \supp (h)=
\supp (f)$. Similarly $\supp (f) \cap \supp (h)= \supp (h)$ and thus $\supp (f) = \supp
(h)$.
\end{proof}

Since $R$ has no nontrivial idempotent elements, the Boolean ring $E(D)$ is precisely the
set $\{1_K : K \subseteq \G^{(0)}\text{ is compact open}\}$, and so corresponds to the
Boolean algebra of compact open subsets of $\G^{(0)}$. We write $\widehat{D}$ for the
Stone spectrum of $E(D)$: that is, the space of Boolean-ring homomorphisms $\pi : E(D)
\to \{0,1\}$. By Stone duality, there is a homeomorphism $\varepsilon : \G^{(0)} \to
\widehat{D}$ such that $\varepsilon_u(p) := p(u)$ for $p \in E(D)$; the inverse of this
map takes a Boolean-ring homomorphism $\pi : E(D) \to \{0,1\}$ to the unique point in
$\big(\bigcap_{\pi(1_K) = 1} K\big) \setminus \big(\bigcup_{\pi(1_K) = 0} K\big)$.

Recall that there is an action $\theta$ of $S_{\G, \star}$ on $\G^{(0)}$ such that
$\dom(\theta_V) = s(V)$, $\cod(\theta_V) = r(V)$ and $\theta_V(s(\alpha)) = r(\alpha)$
for all $\alpha \in V$.

\begin{lemma}\label{lem:S action}
Let $\G$ be an ample Hausdorff groupoid, let $c : \G \to \Gamma$ be a grading of $\G$ by
a discrete group with $c^{-1}(e)$ topologically principal, let $R$ be a commutative
integral domain with~1, and let $\sN_\star(D)/{\sim}$ be the inverse semigroup of
Lemma~\ref{lem:S smaller}. With $\varepsilon : \G^{(0)} \to \widehat{D}$ and $\theta :
S_{\G, \star} \curvearrowright \G^{(0)}$ as above, and $\tilde{q} : \sN_\star(D)/{\sim}
\to S_{\G,\star}$ as in Lemma~\ref{lem:S smaller}, we have
\begin{equation}\label{eq:theta-varphi}
\varepsilon_{\theta_{\tilde{q}(f)}(u)}(p)
    = \varepsilon_u(f^* p f)\quad\text{ for all $f \in \sN_\star(D)/{\sim}$, $u \in s(\supp(f))$ and $p \in E(D)$.}
\end{equation}
\end{lemma}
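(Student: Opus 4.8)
The plan is to reduce the identity to a direct computation with indicator functions by invoking the concrete description of normalisers provided by Proposition~\ref{prp:standard form} and Corollary~\ref{cor:nice normalisers}. First I would note that since both sides of~\eqref{eq:theta-varphi} depend on $f$ only through $\supp(f)$ and through the maps $p \mapsto f^*pf$ (this is exactly what condition~(\ref{it:sim2}) in Lemma~\ref{lem:S smaller} records), the statement is well-defined on $\sN_\star(D)/{\sim}$, so we may choose a convenient representative. By Corollary~\ref{cor:nice normalisers}, $f \in \sN_\star(D)$ means $\supp(f)$ is a homogeneous compact open local bisection $U$ and $f$ takes values in $\units{R} \cup \{0\}$; moreover $f^*f = 1_{s(U)}$ and $ff^* = 1_{r(U)}$. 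It therefore suffices to prove the identity with $f = 1_U$ for an arbitrary homogeneous compact open local bisection $U$ (one can either reduce directly to this case, or just run the computation for general $f$; the values of $f$ will cancel against those of $f^*$).

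Next I would unwind both sides. Fix $u \in s(U)$ and $p = 1_K \in E(D)$ with $K \subseteq \G^{(0)}$ compact open; since $E(D)$ is spanned by such indicator functions this is enough. On the left, $\tilde q([1_U]) = U$, and $\theta_U(u) = r(\alpha)$ where $\alpha$ is the unique element of $U$ with $s(\alpha) = u$. Hence $\varepsilon_{\theta_U(u)}(1_K) = 1_K(r(\alpha))$, which is $1$ if $r(\alpha) \in K$ and $0$ otherwise. On the right, using $1_U^* = 1_{U^{-1}}$ and the product rule $1_A * 1_B = 1_{AB}$ from \cite[Proposition~4.5]{Ste10}, we get $1_U^* * 1_K * 1_U = 1_{U^{-1}} * 1_K * 1_U = 1_{U^{-1}KU}$, and $U^{-1}KU \subseteq \G^{(0)}$ because $U$ is a local bisection. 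Then $\varepsilon_u(1_{U^{-1}KU}) = 1_{U^{-1}KU}(u)$, which is $1$ precisely when $u \in U^{-1}KU$, i.e. when there is $\gamma \in U$ with $r(\gamma) \in K$ and $s(\gamma) = u$. By uniqueness of $\alpha$ this happens exactly when $r(\alpha) \in K$. So both sides equal $1_K(r(\alpha))$, which proves~\eqref{eq:theta-varphi} for $p = 1_K$, and hence for all $p \in E(D)$ by $R$-linearity of both sides in $p$ (noting $\varepsilon_u$ and $\varepsilon_{\theta_U(u)}$ are ring homomorphisms on $E(D)$ and $p \mapsto f^*pf$ is additive).

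The only genuinely delicate point is the reduction to a representative and the check that the formula does not depend on the choice: this is where conditions~(\ref{it:sim2}) of the equivalence relation $\sim$ and the agreement $\tilde q([f]) = \supp(f)$ are needed, so I would state that cleanly up front. Everything after that is a routine identity-function manipulation using $1_A*1_B = 1_{AB}$, $1_U^* = 1_{U^{-1}}$, and the fact that a local bisection $U$ has the property that for each $u \in s(U)$ there is a unique $\alpha \in U$ with $s(\alpha) = u$, with $r(\alpha) = \theta_U(u)$. I do not expect any real obstacle here; the lemma is essentially bookkeeping that sets up the groupoid-of-germs construction in the next subsection, and the main content has already been done in Proposition~\ref{prp:standard form}.
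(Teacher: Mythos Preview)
Your proposal is correct and follows essentially the same approach as the paper: both compute each side at a generic $p = 1_K$ and reduce to $1_K(r(\alpha))$ where $\alpha$ is the unique element of $\supp(f)$ with $s(\alpha)=u$. The paper works directly with a general $f$ and uses $f(\alpha)^*f(\alpha)=1$ (your parenthetical ``the values of $f$ will cancel against those of $f^*$'' is exactly this), whereas you first reduce to $f=1_U$; this is harmless but unnecessary, and note that every $p\in E(D)$ already has the form $1_K$, so no linearity argument is needed at the end.
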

\begin{proof}
Fix $f \in \sN_\star(D)$, $u \in s(\supp(f))$ and $p \in E(D)$. Then $p = 1_K$ for some
compact open $K \subseteq \G^{(0)}$. Let $\alpha \in \supp(f)$ be the unique element with
$s(\alpha) = u$. Then
\[
\varepsilon_{\theta_{\tilde{q}(f)}(u)}(p)
    = p(\theta_{\tilde{q}(f)}(u))
    = 1_K(r(\alpha)).
\]
Also,
\begin{align*}
\varepsilon_u(f^* p f)
    = (f^* 1_K f)(u)
    &= \sum_{\beta\gamma = u} (1_Kf)^*(\beta) f(\gamma)
    = (1_K f)^*(\alpha^{-1})f(\alpha)\\
    &= (1_K f)(\alpha)^*f(\alpha)
    = 1_K(r(\alpha)) f(\alpha)^*f(\alpha)
    = 1_K(r(\alpha)).\qedhere
\end{align*}
\end{proof}

\subsection{Groupoids of germs}

If $\varphi$ is an action of a countable inverse semigroup $S$ on a locally compact
Hausdorff space $X$, then the groupoid of germs $S \times_\varphi X$ is defined as
follows (see for example \cite{exel:bbms2007, Paterson}). Define a relation $\sim$ on
$\{(s, x) \in S \times X : x \in \dom(\varphi_s)\}$ by $(s, x) \sim (s', y)$ if $x = y$
and there is an idempotent $p \in E(S)$ such that $x \in \dom(p)$ and $sp = s'p$. This is
an equivalence relation, and the collection $S \times_\varphi X$ of equivalence classes
for this relation is a locally compact \'etale groupoid with unit space $X$ and structure
maps $r([s,x]) = \varphi_s(x)$, $s([s,x]) = x$, $[s,\varphi_t(x)][t, x] = [st, x]$, and
$[s,x]^{-1} = [s^*, \varphi_s(x)]$. Moreover, if $\tilde{c} : S\setminus\{0\} \to \Gamma$
is a grading, then, as idempotent elements $p \in S \setminus\{0\}$ satisfy $\tilde{c}(p)
= e$, there is a grading of $S \times_\varphi X$ given by $[s,x] \mapsto \tilde{c}(s)$.

Proposition~5.4 of \cite{exel:bbms2007} implies that for graded ample Hausdorff groupoids
$\G$, the groupoid of germs for the action $\theta$ of $S_{\G, \star}$ on $\G^{(0)}$ is
canonically isomorphic to $\G$. Combining the preceding subsections with this result, we
recover $\G$ from $\sN_\star(D)$ and $D$.

\begin{lemma}\label{lem:action matchup}
Let $\G$ be an ample Hausdorff groupoid, let $c : \G \to \Gamma$ be a grading of $\G$ by
a discrete group with $c^{-1}(e)$ topologically principal, let $R$ be a commutative
integral domain with~1, and let $\sN_\star(D)/{\sim}$ be the inverse semigroup of
Lemma~\ref{lem:S smaller}. Then there is an action $\varphi$ of $\sN_\star(D)/{\sim}$ on
$\widehat{D}$ such that $\dom([f]) = \{\pi : \pi(1_{s(\supp(f))}) = 1\}$, $\cod([f]) =
\{\pi : \pi(1_{r(\supp(f))}) = 1\}$, and
\begin{equation}\label{eq:varphi description}
\varphi_{[f]}(\pi)(p) = \pi(f^* p f)\quad\text{ for all $f \in \sN_\star(D)/{\sim}$,
                $\pi \in \dom([f])$ and $p \in E(D)$.}
\end{equation}
Moreover, if $\tilde{q} : \sN_\star(D)/{\sim} \to S_{\G, \star}$ is the isomorphism of
Lemma~\ref{lem:S smaller}, then the homeomorphism $\varepsilon : \G^{(0)} \to
\widehat{D}$ intertwines $\theta_{\tilde{q}([f])}$ and $\varphi_{[f]}$ for every $f \in
S$.
\end{lemma}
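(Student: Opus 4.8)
The plan is to obtain $\varphi$ by transport of structure, building it directly out of the action $\theta$ of $S_{\G,\star}$ on $\G^{(0)}$ together with the two identifications already in hand: the Stone homeomorphism $\varepsilon : \G^{(0)} \to \widehat{D}$ and the inverse semigroup isomorphism $\tilde{q} : \sN_\star(D)/{\sim} \to S_{\G,\star}$ of Lemma~\ref{lem:S smaller}. Concretely, for $f \in \sN_\star(D)$ I would set
\[
\varphi_{[f]} := \varepsilon \circ \theta_{\tilde{q}([f])} \circ \varepsilon^{-1}.
\]
Since $\theta$ is an action of the inverse semigroup $S_{\G,\star}$ on the locally compact Hausdorff space $\G^{(0)}$ and $\varepsilon$ is a homeomorphism, $V \mapsto \varepsilon \circ \theta_V \circ \varepsilon^{-1}$ is an action of $S_{\G,\star}$ on $\widehat{D}$; composing with the isomorphism $\tilde{q}$ then produces an action $\varphi$ of $\sN_\star(D)/{\sim}$ on $\widehat{D}$, so the algebraic requirements ($\varphi_{[f]}\varphi_{[g]} = \varphi_{[fg]}$, $\varphi_{[f]}^{-1} = \varphi_{[f^*]}$, idempotents acting as partial identities, and the domains covering $\widehat{D}$) come for free. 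What remains is to identify $\dom([f])$ and $\cod([f])$ explicitly and to verify the formula~\eqref{eq:varphi description}; the intertwining assertion is then true by the very definition of $\varphi_{[f]}$.

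For the domain and codomain I would use Stone duality in the form $\varepsilon_u(1_K) = 1_K(u)$, which gives $\varepsilon(K) = \{\pi \in \widehat{D} : \pi(1_K) = 1\}$ for every compact open $K \subseteq \G^{(0)}$. Taking $K = s(\supp f)$ — which is compact open since $\supp f$ is a compact open local bisection and $s$ is a local homeomorphism — yields $\dom(\varphi_{[f]}) = \varepsilon(\dom\theta_{\tilde{q}([f])}) = \varepsilon(s(\supp f)) = \{\pi : \pi(1_{s(\supp f)}) = 1\}$, and the same computation with $r$ in place of $s$, using $\cod\theta_V = r(V)$, gives $\cod(\varphi_{[f]}) = \{\pi : \pi(1_{r(\supp f)}) = 1\}$.

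For~\eqref{eq:varphi description} I would fix $f \in \sN_\star(D)$, $\pi \in \dom([f])$ and $p \in E(D)$, write $u := \varepsilon^{-1}(\pi) \in s(\supp f)$ so that $\pi = \varepsilon_u$, and compute
\[
\varphi_{[f]}(\pi)(p) = \varepsilon_{\theta_{\tilde{q}([f])}(u)}(p) = \varepsilon_u(f^* p f) = \pi(f^* p f),
\]
the middle equality being precisely~\eqref{eq:theta-varphi} from Lemma~\ref{lem:S action}. Here $f^* p f \in E(D)$ by the reasoning in the proof of Corollary~\ref{cor:nice normalisers}, so the right-hand side is meaningful, and it depends only on $[f]$ by condition~(\ref{it:sim2}) in the definition of $\sim$. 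Finally, $\varphi_{[f]} \circ \varepsilon = \varepsilon \circ \theta_{\tilde{q}([f])}$ on $s(\supp f)$ is immediate from the definition of $\varphi_{[f]}$, which is the asserted intertwining.

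I do not anticipate a genuine obstacle: the substance has already been carried out in Lemmas~\ref{lem:S smaller} and~\ref{lem:S action}. The only points calling for a little care are (i) observing that ``being an action of an inverse semigroup'' — a homomorphism into the partial-homeomorphism inverse semigroup of $\widehat{D}$ whose domains cover $\widehat{D}$ — is manifestly preserved under conjugation by a homeomorphism and composition with an inverse-semigroup isomorphism, and (ii) translating ``compact open subset of $\G^{(0)}$'' into ``clopen subset of $\widehat{D}$'' under $\varepsilon$ so that the domains match the stated description.
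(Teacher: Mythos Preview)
Your proposal is correct and follows essentially the same approach as the paper: define $\varphi$ by transporting the action $\theta$ across the isomorphism $\tilde{q}$ and the homeomorphism $\varepsilon$, then invoke Lemma~\ref{lem:S action} (equation~\eqref{eq:theta-varphi}) to verify the explicit formula~\eqref{eq:varphi description}. The paper's own proof is in fact more terse than yours---it says only that the result ``follows by pulling the action $\theta$ back to an action of $\sN_\star(D)/{\sim}$ via the isomorphism $\tilde{q}$''---so your additional care in spelling out the domain/codomain identifications and the well-definedness of the formula only makes the argument more complete.
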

\begin{proof}
Lemma~\ref{lem:S action} shows that the formula given for $\varphi$ satisfies
\[
\varphi_{[f]}(\varepsilon_u) = \varepsilon_{\theta_{\tilde{q}([f])}}
\]
for all $f$ and $u$, so the result follows by pulling the action $\theta$ back to an
action of $\sN_\star(D)/{\sim}$ via the isomorphism $\tilde{q}$.
\end{proof}

We now obtain our key result.

\begin{corollary}\label{cor:groupoid iso}
Let $\G$ be an ample Hausdorff groupoid, $c : \G \to \Gamma$ be a grading of $\G$ by a
discrete group with $c^{-1}(e)$ topologically principal, $R$ be a commutative integral
domain with~1, and $\sN_\star(D)/{\sim}$ be the inverse semigroup of Lemma~\ref{lem:S
smaller}. Let $\varphi : \sN_\star(D)/{\sim} \curvearrowright \widehat{D}$ be the action
of Lemma~\ref{lem:action matchup}. Then, there is an isomorphism $\pi :
(\sN_\star(D)/{\sim}) \times_\varphi \widehat{D} \to \G$ such that
\[
\pi\big(\big[[f], \varepsilon_{s(\alpha)}\big]) = \alpha
\]
for all $f \in \sN_\star(D)$ and $\alpha \in \supp(f)$. Moreover, $\pi$ intertwines the
grading on $(\sN_\star(D)/{\sim}) \times_\varphi \widehat{D}$ induced by $\tilde{c}$ and
the grading $c$ of $\G$.
\end{corollary}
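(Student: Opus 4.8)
The plan is to assemble Corollary~\ref{cor:groupoid iso} by composing the two isomorphisms built in the preceding subsections: the abstract isomorphism of groupoids of germs coming from Exel's Proposition~5.4 of \cite{exel:bbms2007}, and the isomorphism $\tilde{q} : \sN_\star(D)/{\sim} \to S_{\G,\star}$ of Lemma~\ref{lem:S smaller}, transported through the homeomorphism $\varepsilon : \G^{(0)} \to \widehat{D}$. Concretely, Exel's result gives a canonical isomorphism $\kappa : S_{\G,\star} \times_\theta \G^{(0)} \to \G$ with $\kappa([U, s(\alpha)]) = \alpha$ for $U \in S_{\G,\star}$ and $\alpha \in U$. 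Lemma~\ref{lem:action matchup} tells us that the pair $(\tilde{q}, \varepsilon)$ intertwines the action $\varphi$ of $\sN_\star(D)/{\sim}$ on $\widehat{D}$ with the action $\theta$ of $S_{\G,\star}$ on $\G^{(0)}$, so it induces an isomorphism of groupoids of germs $\Phi : (\sN_\star(D)/{\sim}) \times_\varphi \widehat{D} \to S_{\G,\star} \times_\theta \G^{(0)}$ sending $[[f], \pi]$ to $[\tilde{q}([f]), \varepsilon^{-1}(\pi)]$. Then $\pi := \kappa \circ \Phi$ is the desired map.

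The key steps, in order, are as follows. First I would record precisely the statement of Exel's Proposition~5.4 in our notation: that the germ groupoid $S_{\G,\star} \times_\theta \G^{(0)}$ is isomorphic to $\G$ via $[U, u] \mapsto$ (the unique $\alpha \in U$ with $s(\alpha) = u$), and that this isomorphism respects the gradings $\bar{c}$ and $c$. Second, I would observe that an isomorphism of inverse semigroups together with a compatible homeomorphism of the underlying spaces that intertwines the two actions automatically induces an isomorphism of the associated germ groupoids — this is a formal consequence of the construction of groupoids of germs recalled in Subsection~``Groupoids of germs'', since the defining equivalence relation on $\{(s,x) : x \in \dom(\varphi_s)\}$ is preserved (idempotents map to idempotents, domains match up by Lemma~\ref{lem:action matchup}, and $sp = s'p$ transports along $\tilde{q}$). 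Third, I would verify the germ-level formula: for $f \in \sN_\star(D)$ and $\alpha \in \supp(f) = q(f)$, we have $\varepsilon^{-1}(\varepsilon_{s(\alpha)}) = s(\alpha)$ by definition of $\varepsilon$, and $\tilde{q}([f]) = \supp(f)$, so $\Phi([[f], \varepsilon_{s(\alpha)}]) = [\supp(f), s(\alpha)]$, whence $\kappa$ applied to this is exactly the unique element of $\supp(f)$ with source $s(\alpha)$, namely $\alpha$. Fourth, I would track the gradings: the grading on $(\sN_\star(D)/{\sim}) \times_\varphi \widehat{D}$ sends $[[f],\pi]$ to $\tilde{c}(f)$; under $\Phi$ this matches the grading $[\supp(f), u] \mapsto \bar{c}(\supp(f))$ on the $S_{\G,\star}$-germ groupoid (since $\tilde{c}(f) = \bar{c}(\supp(f))$ by Corollary~\ref{cor:nice normalisers}); and under $\kappa$ the latter matches $c$ by the grading part of Exel's result. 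Composing, $\pi$ intertwines the $\tilde{c}$-grading with $c$.

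I do not expect a serious obstacle here: the corollary is genuinely a bookkeeping assembly of results already in hand, and the only care required is in citing Exel's proposition in a form that includes the grading compatibility and the explicit formula for the isomorphism on germs of the form $[U, s(\alpha)]$. The mildest subtlety is that Exel's isomorphism is stated for $S_\G \curvearrowright \G^{(0)}$ in the ungraded setting, so one must note that restricting to the $\star$-subsemigroup $S_{\G,\star}$ and using that it generates $\G$ (every $\gamma \in \G$ lies in some homogeneous compact open local bisection, since $c$ is continuous and $\G$ is ample Hausdorff) gives the graded version; this is exactly the content alluded to in the sentence preceding the corollary. Everything else — well-definedness of $\Phi$ on equivalence classes, functoriality of the germ construction, and the two formula checks — is routine once the framework of Lemmas~\ref{lem:S smaller}, \ref{lem:S action} and~\ref{lem:action matchup} is in place.
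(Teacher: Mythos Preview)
Your proposal is correct and follows essentially the same approach as the paper: both compose Exel's isomorphism $S_{\G,\star} \times_\theta \G^{(0)} \cong \G$ (from \cite[Proposition~5.4]{exel:bbms2007}) with the germ-groupoid isomorphism induced by the intertwining pair $(\tilde{q}, \varepsilon)$ supplied by Lemmas~\ref{lem:S smaller} and~\ref{lem:action matchup}, and then check the explicit formula and grading compatibility. Your write-up is in fact slightly more detailed than the paper's, in particular in flagging the passage from $S_\G$ to the homogeneous subsemigroup $S_{\G,\star}$ and in tracking the gradings step by step.
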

\begin{proof}
Proposition~5.4 of \cite{exel:bbms2007} implies that there is an isomorphism
\[
\pi_0 : S_{\G, \star} \times_\theta \G^{(0)} \to \G
\]
such that $\pi_0([V, s(\alpha)]) = \alpha$ for all $V \in S_{\G, \star}$ and $\alpha \in
V$. This $\pi_0$ clearly carries the grading of $S_{\G, \star} \times_\theta \G^{(0)}$
induced by $c$ to $c$. The final statement of Lemma~\ref{lem:action matchup} shows that
$\tilde{q}$ and $\varepsilon$ induce an isomorphism of the action of
$\sN_\star(D)/{\sim}$ on $\widehat{D}$ with that of $S_{\G, \star}$ on $\G^{(0)}$. Thus,
there is an isomorphism $\pi' : (\sN_\star(D)/{\sim}) \times_\varphi \widehat{D} \to
S_{\G, \star} \times_\theta \G^{(0)}$ such that
$\pi'\big(\big[[f],\varepsilon_u\big]\big) = [\tilde{q}(f), u]$ for all $f$ and $u$, and
$\pi'$ takes the grading of $(\sN_\star(D)/{\sim}) \times_\varphi \widehat{D}$ induced by
$\tilde{c}$ to the grading of $S_{\G, \star} \times_\theta \G^{(0)}$ induced by $c$.
Therefore, $\pi = \pi_0 \circ \pi'$ is the desired isomorphism.
\end{proof}

\begin{remark}
As an alternative to Corollary~\ref{cor:groupoid iso}, one might aim to employ
Lawson--Lenz's noncommutative generalisation of Stone duality \cite[Theorem~3.25]{LL}
(see also \cite{Lawson:JAMS2010}) to recover $\G$ from $\sN_\star(D)/{\sim}$. Indeed, as
the anonymous referee points out, the results of this section could possibly be
reformulated as a statement of graded duality, between appropriate categories of graded
boolean inverse semigroups and graded \'etale groupoids. However, we will not pursue this
interpretation further here.
\end{remark}

\subsection{Proof of Theorem~\ref{thm:main}}

We can now prove our main theorem.

\begin{proof}[Proof of Theorem~\ref{thm:main}]
Clearly if $\bar\rho : \G \to \Hs$ is a graded isomorphism of groupoids, then there is a
graded isomorphism $\rho : A_R(\G) \to A_R(\Hs)$ given by $\rho(f) = f \circ
\bar\rho^{-1}$. Moreover, this isomorphism carries $D_\G$ to $D_\Hs$ because $\bar\rho$ carries
$\G^{(0)}$ to $\Hs^{(0)}$.

For the reverse implication, suppose that $\rho : A_R(\G) \to A_R(\Hs)$ is a graded ring
isomorphism with $\rho(D_\G) \subseteq D_\Hs$. Lemma~\ref{lem:masa} applied to
$c^{-1}(e)$ implies that $D_\G$ is maximal abelian subring of $A_R(\G)_e$, which implies
that $\rho(D_\G)$ is a maximal abelian subring of $A_R(\Hs)_e$  because $\rho$ is a
graded ring isomorphism. Since $\rho(D_\G)$ is contained in the abelian subring $D_\Hs$
of $A_R(\Hs)_e$, we deduce that $\rho(D_\G) = D_\Hs$. Thus, $\rho$ restricts to an
isomorphism of boolean rings $E(D_\G) \cong E(D_\Hs)$ inducing a homeomorphism $\rho^* :
\widehat{D}_\Hs \cong \widehat{D}_\G$ given by $\rho^*(\pi) = \pi \circ \rho$.

If $n$ is a homogeneous graded normaliser of $D_\G$ in $A_R(\G)$, then $\rho(n)$ is a
homogeneous normaliser of $D_\Hs$ in $A_R(\Hs)$: the conditions defining a normaliser
involve only the ring structure. Corollary~\ref{cor:nice normalisers} shows that
$\sN_\star(D_\G)$ is an inverse semigroup, as is $\sN_\star(D_\Hs)$. Thus, $\rho$
restricts to a graded isomorphism $\sN_\star(D_\G) \cong \sN_\star(D_\Hs)$ of inverse
semigroups. Since $\rho$ carries $E(D_\G)$ to $E(D_\Hs)$, the equivalence relations
$\sim$ on $\sN_\star(D_\G)$ and $\sN_\star(D_\Hs)$ defined in Lemma~\ref{lem:S smaller}
satisfy $f \sim h$ if and only if $\rho(f) \sim \rho(h)$. So $\rho$ induces a graded
isomorphism $\tilde\rho : \sN_\star(D_\G)/{\sim} \to \sN_\star(D_\Hs)/{\sim}$. The
definition of $\rho^*$ in the preceding paragraph and the formula~\eqref{eq:varphi
description} in Lemma~\ref{lem:action matchup} show that
$\rho^*(\varphi_{\tilde\rho([f])}(\pi)) = \varphi_{[f]}(\rho^*(\pi))$ for all $f \in
\sN_\star(D_\G)$ and $\pi \in \widehat{D}_\Hs$. So there exists a graded isomorphism
\[
\hat\rho : (\sN_\star(D_\G)/{\sim}) \times_\varphi \widehat{D}_\G \cong (\sN_\star(D_\Hs)/{\sim}) \times_\varphi \widehat{D}_\Hs
\]
that carries $\big[[f], \rho^*(\pi)\big]$ to $\big[[\rho(f)], \pi\big]$ for $f \in
\sN_\star(D_\G)$ and $\pi \in \widehat{D}_\Hs$. Now the graded isomorphisms $\pi_\G :
(\sN_\star(D_\G)/{\sim}) \times_\varphi \widehat{D}_\G \to \G$ and $\pi_\Hs :
(\sN_\star(D_\Hs)/{\sim}) \times_\varphi \widehat{D}_\Hs \to \Hs$ yield a graded
isomorphism $\bar\rho := \pi_\Hs \circ \hat\rho \circ \pi^{-1}_\G : \G \to \Hs$.
\end{proof}

\begin{remark}
It is worth discussing the extent to which the hypotheses on our main theorem are
necessary.
\begin{itemize}
\item If $c^{-1}(\G)_e$ is not topologically principal, then $D_\G$ is not
    necessarily maximal abelian in $A_R(\G)_e$; and, in addition, key steps in our
    analysis of the normalisers of $D_\G$ and of the quotient
    $\sN_\star(D_\G)/{\sim}$ break down. It is not clear, however, that this
    hypothesis is necessary to our main result: Brown--Clark--an Huef \cite{BCaH}
    show that the conclusion of our main theorem is valid for arbitrary graph
    groupoids with the trivial grading. Our theorem also holds in the special case
    that $\G$ has one unit (is a discrete group), $c$ is the trivial grading and $R =
    \mathbb{Z}$: it then reduces to the classical fact that the group-ring
    construction and the group-of-units construction are adjoint functors.
\item It is unclear whether it is necessary to assume that $R$ is an integral domain
    or that it is unital. These hypotheses are used heavily in our analysis, but we
    do not have a counterexample to our main result in their absence.
\item It is, however, necessary to make some assumptions on $R$: Let $R :=
    C_{\operatorname{lc}}(K)$, the ring of locally-constant complex-valued functions
    on the Cantor set. Since $K \cong K \sqcup K$, we have $R \cong R \oplus R$. Hence,
    for any ample Hausdorff groupoid $\G$, there exists a diagonal-preserving
    isomorphism $A_R(\G) \cong A_R(\G) \oplus A_R(\G) = A_R(\G \sqcup \G)$, whereas
    $\G$ and $\G \sqcup \G$ are not usually isomorphic.
\end{itemize}
\end{remark}

\section{Applications}\label{sec:apps}

\subsection{Topologically principal groupoids}

Here we record what our results say for ungraded ample groupoids $\G$. Given any groupoid
$\G$ we can endow it with the trivial grading $c : \G \to \{e\}$, and then apply our
main theorem. In this instance, we have $\sN_\star(D) = \sN(D)$ and $\Clc{\G}{R}_\star =
\Clc{\G}{R}$.

\begin{theorem}\label{Thm:Groupoid}
Let $\G$ be an ample Hausdorff groupoid and suppose that $\G$ is topologically principal.
Let $R$ be a commutative integral domain with~1. Then
\begin{enumerate}
\item $\sN(D) = \{n \in A_R(\G) : \supp(n)\text{ is a local bisection and } n(\G)
    \subseteq \units{R} \cup \{0\}\}$.
\item  For each $n \in \sN(D)$, there is a unique $n^* \in A_R(\G)$ such that $nDn^*
    \cup n^*Dn \subseteq D$, $nn^*n = n$ and $n^*nn^* = n^*$. Moreover, the element
    $n^*$ belongs to $\sN(D)$, and $\sN(D)$ is an inverse semigroup under $^*$.
\item There is an equivalence relation on $\sN(D)$ given by $n_1 \sim n_2$ if and
    only if $n^*_1 p n_1 = n^*_2 p n_2$ and $n_1 p n^*_1 = n_2 p n^*_2$ for every
    idempotent $p \in D$. Furthermore, $\sN(D)/{\sim}$ is an inverse semigroup under
    the operations inherited from $\sN(D)$.
\item There is an action $\varphi$ of $\sN(D)/{\sim}$ on the Stone spectrum
    $\widehat{D}$ of the collection of idempotent elements of $D$ such that
    $\varphi_{[n]}(\pi)(p) = \pi(n^* p n)$ for all $n \in \sN(D)$, $\pi \in
    \widehat{D}$ and every idempotent $p \in D$.
\item There is an isomorphism $\G \cong (\sN(D)/{\sim}) \times_\varphi \widehat{D}$
    that carries $\alpha \in \G$ to $\big[[1_V], \varepsilon_{s(\alpha)}\big]$ for
    any compact open local bisection $V$ containing $\alpha$.
\end{enumerate}
\end{theorem}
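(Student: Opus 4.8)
The plan is to obtain Theorem~\ref{Thm:Groupoid} as the specialisation of the results of this section to the trivial grading $c : \G \to \Gamma = \{e\}$. Since $\G$ is assumed topologically principal, $c^{-1}(e) = \G$ is topologically principal, so Proposition~\ref{prp:standard form}, Corollary~\ref{cor:nice normalisers}, Lemmas~\ref{lem:S smaller} and~\ref{lem:action matchup}, and Corollary~\ref{cor:groupoid iso} all apply; moreover every element of $A_R(\G)$ is homogeneous for this grading, so $\sN_\star(D) = \sN(D)$, $\Clc{\G}{R}_\star = \Clc{\G}{R}$, $S_{\G,\star} = S_\G$, and the induced grading $\tilde c$ is trivial. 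With this dictionary in place, each part of the theorem is read off from a correspondingly numbered earlier result.

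First, part~(1) is exactly~\eqref{eq:S def} of Corollary~\ref{cor:nice normalisers}: a function lies in $\Clc{\G}{R}$ precisely when $n(\G) \subseteq \units{R} \cup \{0\}$, the homogeneity requirement is vacuous, and the support condition is unchanged. For part~(2), Corollary~\ref{cor:nice normalisers} provides the element $n^*$ (the image of $n$ under the involution~\eqref{eq:star op}), which lies in $\sN(D)$ and satisfies $nn^*n = n$, $n^*nn^* = n^*$ and $n^*Dn \cup nDn^* \subseteq D$; that corollary also gives that $\sN(D)$ is an inverse semigroup under $^*$. The only additional point is uniqueness of $n^*$, and here I would invoke Proposition~\ref{prp:standard form}(\ref{it:normaliser backward}): any $m \in V(n)$ satisfying~(N) is forced to have the form $m = \sum_{V \in F} a_V^{-1} 1_{V^{-1}}$ where $n = \sum_{V \in F} a_V 1_V$; since the local bisection $\supp(n)$, its decomposition $\{V\}$, and the values $a_V = n|_V$ depend only on $n$, so does $m$, whence $m = n^*$.

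Part~(3) is Lemma~\ref{lem:S smaller}: in the trivial grading the first of its three defining conditions for $\sim$ is automatic, so $\sim$ collapses to the two remaining conditions $n_1^* p n_1 = n_2^* p n_2$ and $n_1 p n_1^* = n_2 p n_2^*$ for all idempotents $p$; recalling that $E(D) = \{1_K : K \subseteq \G^{(0)}\text{ compact open}\}$ is exactly the set of idempotents of $D$ (as $R$ has no nontrivial ones), this is the relation in the statement, and the lemma gives that $\sN(D)/{\sim}$ is an inverse semigroup under $[n_1][n_2] = [n_1 n_2]$, $[n]^* = [n^*]$. Part~(4) is then Lemma~\ref{lem:action matchup}, which supplies the action $\varphi$ with $\varphi_{[n]}(\pi)(p) = \pi(n^* p n)$, its domain and codomain being as described there.

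Finally, part~(5) follows from Corollary~\ref{cor:groupoid iso}, which gives an isomorphism $\pi : (\sN(D)/{\sim}) \times_\varphi \widehat{D} \to \G$ with $\pi([[f], \varepsilon_{s(\alpha)}]) = \alpha$ for all $f \in \sN(D)$ and $\alpha \in \supp(f)$. Taking $f = 1_V$ for a compact open local bisection $V$ and noting $\supp(1_V) = V$, we obtain $\pi([[1_V], \varepsilon_{s(\alpha)}]) = \alpha$ for every $\alpha \in V$; equivalently $\pi^{-1}(\alpha) = [[1_V], \varepsilon_{s(\alpha)}]$, independently of the choice of $V \ni \alpha$ since distinct such $V$ give $\sim$-equivalent (indeed germ-equivalent) pairs. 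I do not anticipate a genuine obstacle: Theorem~\ref{Thm:Groupoid} is essentially a re-packaging of Section~\ref{sec:main} for ungraded groupoids, and the only step not literally quoted---the uniqueness clause in~(2)---reduces immediately to the standard form of Proposition~\ref{prp:standard form}(\ref{it:normaliser backward}).
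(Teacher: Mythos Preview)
Your proposal is correct and follows exactly the same route as the paper: specialise the graded machinery of Section~\ref{sec:main} to the trivial cocycle $c:\G\to\{e\}$ and read off each item from Corollary~\ref{cor:nice normalisers}, Lemma~\ref{lem:S smaller}, Lemma~\ref{lem:action matchup}, and Corollary~\ref{cor:groupoid iso} respectively. You have in fact been more careful than the paper on one point: the uniqueness clause in~(2) is not literally contained in Corollary~\ref{cor:nice normalisers}, and your reduction via Proposition~\ref{prp:standard form}(\ref{it:normaliser backward}) is the right way to extract it (any $m\in V(n)$ satisfying~(N) is forced to be the function $\gamma\mapsto n(\gamma^{-1})^*$, hence equals $n^*$); the paper simply asserts that Corollary~\ref{cor:nice normalisers} gives~(2) without isolating this step.
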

\begin{proof}
Let $c : \G \to \{e\}$ be the trivial grading. Corollary~\ref{cor:nice normalisers}
applied to this grading $c$ gives (1)~and~(2). Part~(3) follows from
Lemma~\ref{lem:S smaller} with the trivial grading, and~(4) follows from Lemma~\ref{lem:action
matchup}. Finally, the inverse of the isomorphism $\pi$ obtained from
Corollary~\ref{cor:groupoid iso} satisfies the formula described in~(5).
\end{proof}

\begin{remark}
The action of $\sN(D)$ on $\widehat{D}$ given by $(n \cdot \pi)(e) = \pi(n^* e n)$, which
induces an action of $\sN(D)/{\sim}$ in Theorem~\ref{Thm:Groupoid}(4) is usually
called the \emph{spectral action}, and it enjoys a universal property (see \cite{Ste10}
for more details). In particular, as described in \cite[Example 5.9]{Ste10}, this action
is the dual of the (right) Munn representation.

Moreover, the isomorphism described in Theorem~\ref{Thm:Groupoid}(4) induces the
isomorphism of Steinberg algebras $A_R(N(D)/{\sim}  \times_\varphi \hat{D})\cong A_R(\G)$
described in \cite[Theorem 6.3]{Ste10}; thus, Theorem~\ref{Thm:Groupoid} is, in a sense,
dual to Steinberg's result.
\end{remark}

\begin{corollary}\label{cor:ungraded gpds}
Let $\G$ and $\Hs$ be ample Hausdorff topologically principal groupoids and let $R$ be a
commutative integral domain with~1. The following are equivalent:
 \begin{enumerate}
  \item The groupoids $\G$ and $\Hs$ are isomorphic as topological groupoids.
  \item There is ring isomorphism $\rho:A_R(\G)\to A_R(\Hs)$ such that $\rho(D_\G)
      \subseteq D_\Hs$.
 \end{enumerate}
\end{corollary}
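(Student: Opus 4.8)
The plan is to derive Corollary~\ref{cor:ungraded gpds} directly from Theorem~\ref{thm:main} by specialising to the trivial grading. First I would equip both $\G$ and $\Hs$ with the trivial cocycle $c : \G \to \{e\}$ and $d : \Hs \to \{e\}$, so that the hypothesis that $\G$ and $\Hs$ are topologically principal becomes precisely the requirement that $c^{-1}(e) = \G$ and $d^{-1}(e) = \Hs$ are topologically principal. With $\Gamma = \{e\}$ the trivial group, every ring isomorphism $A_R(\G) \to A_R(\Hs)$ is automatically graded, and every groupoid isomorphism $\G \to \Hs$ automatically satisfies $d \circ \bar\rho = c$, since both sides are the constant map to $e$. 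Thus the two conditions of Theorem~\ref{thm:main} collapse exactly to conditions~(1) and~(2) of the corollary.

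The only substantive point to check is that the trivially-graded normaliser and quotient machinery really does reduce as claimed, but this is already recorded at the start of Section~\ref{sec:apps}: with the trivial grading one has $\sN_\star(D) = \sN(D)$ and $\Clc{\G}{R}_\star = \Clc{\G}{R}$, and Theorem~\ref{Thm:Groupoid} assembles the relevant inverse-semigroup and groupoid-of-germs statements. So the implication $(2) \Rightarrow (1)$ is the reverse implication of Theorem~\ref{thm:main} read with $\Gamma$ trivial, and $(1) \Rightarrow (2)$ is the easy forward implication $\rho(f) = f \circ \bar\rho^{-1}$, which carries $D_\G$ onto $D_\Hs$ because $\bar\rho(\G^{(0)}) = \Hs^{(0)}$.

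I anticipate no genuine obstacle here: the corollary is a formal specialisation, and the proof is essentially the single sentence ``apply Theorem~\ref{thm:main} with the trivial grading.'' The one place demanding a word of care is making explicit that with $\Gamma = \{e\}$ the adjectives ``graded'' attached to both the ring isomorphism and the groupoid isomorphism in Theorem~\ref{thm:main} are vacuous, so that nothing is lost in translation; I would state this explicitly to reassure the reader rather than leave it implicit.

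\begin{proof}
Endow $\G$ and $\Hs$ with the trivial gradings $c : \G \to \{e\}$ and $d : \Hs \to \{e\}$. Then $c^{-1}(e) = \G$ and $d^{-1}(e) = \Hs$ are topologically principal by hypothesis. Since $\Gamma = \{e\}$ is trivial, a ring isomorphism $\rho : A_R(\G) \to A_R(\Hs)$ is automatically graded, and an isomorphism $\bar\rho : \G \to \Hs$ of topological groupoids automatically satisfies $d \circ \bar\rho = c$ because both maps are constant. Hence Theorem~\ref{thm:main} applied to these data says precisely that there is a ring isomorphism $\rho : A_R(\G) \to A_R(\Hs)$ with $\rho(D_\G) \subseteq D_\Hs$ if and only if $\G \cong \Hs$ as topological groupoids; that is, $(1) \Leftrightarrow (2)$.
\end{proof}
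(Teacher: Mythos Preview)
Your proof is correct and takes exactly the same approach as the paper: the paper's entire proof is the single sentence ``Apply Theorem~\ref{thm:main} to the trivial grading $\Gamma : \G \to \{e\}$.'' Your version simply spells out explicitly why the adjective ``graded'' becomes vacuous on both sides when $\Gamma$ is trivial, which is a reasonable elaboration of the same argument.
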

\begin{proof}
Apply Theorem~\ref{thm:main} to the trivial grading $\Gamma : \G \to \{e\}$.
\end{proof}

\subsection{Ring-isomorphisms of Leavitt path algebras}

In this short section, we make an observation about the implications of our results for
Leavitt path algebras. For background on Leavitt path algebras and on Abrams and
Tomforde's isomorphism conjecture, see \cite{AA-P, AT11, AMP, Tomforde:JA07,
Tomforde:JPAA11}.

Abrams and Tomforde conjectured that if $E$ and $F$ are graphs for which there is a ring
isomorphism $L_R(E) \cong L_R(F)$ for some ring $R$, then $C^*(E) \cong C^*(F)$ as
$C^*$-algebras. This conjecture remains open, but we make some headway (see also
\cite{BCaH} and Remark~\ref{rmk:BCaH} below).

\begin{corollary}\label{cor:graphs}
Suppose that $E$ and $F$ are graphs in which every cycle has an exit. Then the following
are equivalent:
\begin{enumerate}
\item\label{it:cor:graphs1} There exists a commutative integral domain $R$ with~1 for
    which there is an isomorphism $\pi : L_R(E) \to L_R(F)$ satisfying $\pi(s_{\mu}
    s_{\mu^*}) \in \lsp_R\{s_\eta s_{\eta^*} : \eta \in F^*\}$ for all $\mu \in E^*$.
\item\label{it:cor:graphs2} There exists a commutative integral domain $R$ with~1 for
    which there is an isomorphism $\pi : L_R(E) \to L_R(F)$ satisfying $\pi(s_{\mu}
    s_{\mu^*})s_\eta s_{\eta^*} = s_\eta s_{\eta^*} \pi(s_\mu s_{\mu^*})$ for all
    $\mu \in E^*$ and $\eta \in F^*$.
\item\label{it:cor:graphs3} For every $^*$-ring $S$ there exists a $^*$-isomorphism
    of $L_S(E)$ onto $L_S(F)$ that carries $\lsp_S\{s_\mu s_{\mu^*} : \mu \in E^*\}$
    to $\lsp_S\{s_\eta s_{\eta^*} : \eta \in F^*\}$.
\item\label{it:cor:graphs4} There is an isomorphism $\psi : C^*(E) \to C^*(F)$ such
    that $\psi(\clsp\{s_\mu s^*_\mu : \mu \in E^*\}) = \clsp\{s_\eta s^*_\eta : \eta
    \in F^*\}$.
\item\label{it:cor:graphs5} There is an isomorphism $\psi : C^*(E) \to C^*(F)$ such
    that $\psi(s_\mu s^*_\mu) s_\eta s^*_\eta = s_\eta s^*_\eta \psi(s_\mu s^*_\mu)$
    for all $\mu \in E^*$ and $\eta \in F^*$.
\end{enumerate}
\end{corollary}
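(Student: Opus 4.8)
The plan is to reduce everything to Theorem~\ref{thm:main} applied to the graph groupoids $\G_E$ and $\G_F$, equipped with their canonical $\mathbb{Z}$-gradings coming from the length cocycle. Recall that for any graph $E$ the Steinberg algebra $A_R(\G_E)$ is canonically isomorphic to $L_R(E)$ (see \cite[Remark~4.4]{CFST14}, \cite[Example~3.2]{CS15}), that under this identification the diagonal subalgebra $D_{\G_E} = A_R(\G_E^{(0)})$ corresponds to $\clsp_R\{s_\mu s_{\mu^*} : \mu \in E^*\}$ (this is exactly the closed linear span of the range projections of finite paths, since the compact open subsets of $\G_E^{(0)}$ are generated by the cylinder sets $Z(\mu)$), and that the $\mathbb Z$-grading of $A_R(\G_E)$ induced by the length cocycle matches the standard $\mathbb Z$-grading of $L_R(E)$ in which $s_\mu s_{\nu^*}$ has degree $|\mu| - |\nu|$. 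The key structural hypothesis ``every cycle has an exit'' enters through the following known fact: $\G_E$ is ample and Hausdorff, and the kernel $c^{-1}(0)$ of the length cocycle (the ``neutrally graded component'', consisting of pairs $(x,0,y)$ with $x=y$ on the tails) is topologically principal precisely when every cycle in $E$ has an exit. So under this hypothesis both $\G_E$ and $\G_F$ satisfy the standing assumptions of Theorem~\ref{thm:main}.

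The proof then proceeds by a cycle of implications. For \eqref{it:cor:graphs1} $\Rightarrow$ \eqref{it:cor:graphs2}: if $\pi(s_\mu s_{\mu^*})$ lies in $\lsp_R\{s_\eta s_{\eta^*}\} = D_{\G_F}$, then since $D_{\G_F}$ is commutative it automatically commutes with every $s_\eta s_{\eta^*}$, so \eqref{it:cor:graphs2} holds. For \eqref{it:cor:graphs2} $\Rightarrow$ \eqref{it:cor:graphs3}: the hypothesis says $\pi(D_{\G_E})$ is contained in the relative commutant of $D_{\G_F}$ in $L_R(F)$; since $\G_F$ is topologically principal, Lemma~\ref{lem:masa} says $D_{\G_F}$ is maximal abelian in $A_R(\G_F)$, and a $\mathbb Z$-homogeneous ring isomorphism carries $D_{\G_E}$, which is generated by idempotents of degree $0$, into $A_R(\G_F)_0$, so $\pi(D_{\G_E})$ is a commutative subring of $A_R(\G_F)_0$ containing $D_{\G_F}$-commuting elements; combined with maximality of $D_{\G_F}$ inside $A_R(\G_F)_0$ (Lemma~\ref{lem:masa} applied to $d^{-1}(0)$, exactly as in the proof of Theorem~\ref{thm:main}) we get $\pi(D_{\G_E}) = D_{\G_F}$. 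I should check here that $\pi$ is automatically graded: this follows because $\pi$ restricts to an isomorphism of the diagonals, hence induces an isomorphism of the graph groupoids by Theorem~\ref{thm:main}, but to close the logic cleanly it is cleaner to first observe that $\pi$ carries the idempotents $s_\mu s_{\mu^*}$ (which generate $D_{\G_E}$ as a ring) into $D_{\G_F}$ and therefore, by Theorem~\ref{thm:main}, induces a groupoid isomorphism $\bar\pi : \G_E \to \G_F$ with $d\circ\bar\pi = c$; then for any $^*$-ring $S$, functoriality of the Steinberg-algebra construction $\bar\pi \mapsto (f \mapsto f\circ\bar\pi^{-1})$ gives a $^*$-isomorphism $L_S(E) = A_S(\G_E) \to A_S(\G_F) = L_S(F)$ carrying $D_{\G_E}$ onto $D_{\G_F}$, i.e.\ carrying $\lsp_S\{s_\mu s_{\mu^*}\}$ onto $\lsp_S\{s_\eta s_{\eta^*}\}$, which is \eqref{it:cor:graphs3}.

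The implication \eqref{it:cor:graphs3} $\Rightarrow$ \eqref{it:cor:graphs1} is trivial (take $S = R$ a field, say $R = \mathbb{C}$; the $^*$-isomorphism is in particular a ring isomorphism and carries $\lsp\{s_\mu s_{\mu^*}\}$ into itself, which is exactly the condition in \eqref{it:cor:graphs1}), so \eqref{it:cor:graphs1}, \eqref{it:cor:graphs2}, \eqref{it:cor:graphs3} are equivalent. For the $C^*$-side, \eqref{it:cor:graphs4} $\Leftrightarrow$ \eqref{it:cor:graphs5} is the same commutativity-vs-containment manoeuvre as \eqref{it:cor:graphs1} $\Leftrightarrow$ \eqref{it:cor:graphs2}, using that $\clsp\{s_\eta s^*_\eta\} = C_0(\G_F^{(0)})$ is a (maximal) abelian $C^*$-subalgebra; here one uses Renault's/Kumjian's reconstruction theorem, or simply the fact that $\clsp\{s_\mu s^*_\mu\}$ is the diagonal masa of $C^*(E)$, so a $\psi$ as in \eqref{it:cor:graphs5} maps it into its relative commutant, which equals itself. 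Finally \eqref{it:cor:graphs3} $\Rightarrow$ \eqref{it:cor:graphs4}: take $S = \mathbb{C}$ with complex conjugation; the $^*$-isomorphism $L_{\mathbb C}(E) \to L_{\mathbb C}(F)$ of \eqref{it:cor:graphs3} is $D$-preserving, hence by Theorem~\ref{thm:main} (or Corollary~\ref{cor:ungraded gpds} applied to $c^{-1}(0)$, $d^{-1}(0)$, plus the grading) arises from a grading-preserving groupoid isomorphism $\bar\pi : \G_E \to \G_F$; such an isomorphism of topological groupoids induces a $^*$-isomorphism $C^*(E) = C^*(\G_E) \to C^*(\G_F) = C^*(F)$ carrying $C_0(\G_E^{(0)}) = \clsp\{s_\mu s^*_\mu\}$ onto $C_0(\G_F^{(0)}) = \clsp\{s_\eta s^*_\eta\}$, which is \eqref{it:cor:graphs4}. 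Conversely, \eqref{it:cor:graphs4} $\Rightarrow$ \eqref{it:cor:graphs3} via Renault's reconstruction: a diagonal-preserving $^*$-isomorphism of graph $C^*$-algebras (with every cycle having an exit, so the groupoids are topologically principal) comes from a groupoid isomorphism, which then induces the $^*$-isomorphisms of all the $L_S$'s by functoriality. The main obstacle is bookkeeping: making sure at each step that the isomorphism in hand is automatically $\mathbb{Z}$-graded (which I get from the fact that it preserves the diagonal, whose idempotents are homogeneous of degree $0$, and that it preserves the $s_\mu s_{\mu^*}$ up to the diagonal), and correctly invoking Theorem~\ref{thm:main} with the length cocycle rather than the trivial grading so that ``every cycle has an exit'' is actually used; the analytic input on the $C^*$-side (that a topological groupoid isomorphism of $\G_E, \G_F$ yields a diagonal-preserving $C^*$-isomorphism, and conversely Renault reconstruction) is standard and I would cite \cite{Ren2008, Kumjian, BCW} rather than reprove it.
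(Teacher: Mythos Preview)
Your overall strategy---pass through a groupoid isomorphism and then use functoriality---matches the paper's. But there is a genuine gap in how you invoke Theorem~\ref{thm:main}.

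First, your claim that ``$c^{-1}(0)$ is topologically principal precisely when every cycle in $E$ has an exit'' is false: the kernel of the length cocycle on $\G_E$ is the AF tail-equivalence relation, which is \emph{always} principal, with no hypothesis on $E$. The condition ``every cycle has an exit'' is what makes the \emph{whole} groupoid $\G_E$ topologically principal. You do in fact use this correctly when you appeal to Lemma~\ref{lem:masa} for $\G_F$, so your setup paragraph is inconsistent with your actual argument.

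Second, and more seriously, in \eqref{it:cor:graphs2}~$\Rightarrow$~\eqref{it:cor:graphs3} you write that $\pi$ ``by Theorem~\ref{thm:main}, induces a groupoid isomorphism $\bar\pi : \G_E \to \G_F$ with $d\circ\bar\pi = c$''. But Theorem~\ref{thm:main} with the $\mathbb{Z}$-grading requires a \emph{graded} ring isomorphism as input, and $\pi$ in \eqref{it:cor:graphs1} or \eqref{it:cor:graphs2} is not assumed to be graded. Your attempted workaround (``$\pi$ is automatically graded because it preserves the diagonal, hence induces a groupoid isomorphism by Theorem~\ref{thm:main}\ldots'') is circular. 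The fix, and what the paper does, is to apply Theorem~\ref{thm:main} with the \emph{trivial} grading---that is, Corollary~\ref{cor:ungraded gpds}---which is available precisely because $\G_E$ and $\G_F$ are themselves topologically principal under the exit hypothesis. This yields a groupoid isomorphism $\bar\pi : \G_E \to \G_F$ with no claim that it intertwines the length cocycles; fortunately you never need that downstream, since functoriality of $\G \mapsto A_S(\G)$ and $\G \mapsto C^*(\G)$ gives diagonal-preserving isomorphisms from any topological groupoid isomorphism. Once you make this correction, the rest of your cycle of implications is fine and agrees with the paper (which uses $S = \mathbb{F}_2$ rather than $\mathbb{C}$ for \eqref{it:cor:graphs3}~$\Rightarrow$~\eqref{it:cor:graphs1}, but that is immaterial).
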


Recall from \cite[Example~3.2]{CS15} that if $E$ is a directed graph, then there is an
isomorphism
\[
\alpha_E : L_R(E) \cong A_R(\G_E)
\]
that carries $\lsp_S\{s_\mu s_{\mu^*} : \mu \in E^*\}$ to $D_{\G_E}$. We will use this
isomorphism at a number of points in the proof of Corollary~\ref{cor:graphs}.

\begin{proof}[Proof of Corollary~\ref{cor:graphs}]
It is well known (see \cite{KPRR}) that the groupoid $\G_E$ of a directed graph $E$ is
topologically principal provided that every cycle in $E$ has an exit. So $\G_E$ and
$\G_F$ are topologically principal.

We first prove (\ref{it:cor:graphs1})\;$\iff$\;(\ref{it:cor:graphs2}). The implication
(\ref{it:cor:graphs1})~implies~(\ref{it:cor:graphs2}) is trivial. For the reverse,
observe that if $\pi$ is as in~(\ref{it:cor:graphs2}), then each $\pi(s_\mu s_{\mu^*})$
commutes with every element of $\lsp_R\{s_\eta s_{\eta^*} : \eta \in F^*\}$. Since the
latter is a maximal abelian subring by Lemma~\ref{lem:masa}, it follows that each
$\pi(s_\mu s_{\mu^*}) \in \lsp_R\{s_\eta s_{\eta^*} : \eta \in F^*\}$.

Next we prove that~(\ref{it:cor:graphs1}) implies
(\ref{it:cor:graphs3})~and~(\ref{it:cor:graphs5}). Suppose that~(\ref{it:cor:graphs1})
holds. Corollary~\ref{cor:ungraded gpds} implies that the graph groupoids $\G_E$ and
$\G_F$ are isomorphic; say $\rho : \G_F \to \G_E$ is an isomorphism. Then $\rho$
restricts to a homeomorphism $\G^{(0)}_F \to \G^{(0)}_E$. For each $*$-ring $S$, $\rho$
induces a $*$-isomorphism $\rho^* : A_S(\G_E) \to A_S(\G_F)$ satisfying $\rho^*(f) = f
\circ \rho$. In particular, $\rho$ carries $D_{\G_E} \subseteq A_S(\G_E)$ to $D_{\G_F}
\subseteq A_S(\G_F)$. So $\alpha^{-1}_F \circ \rho^* \circ \alpha_E$ is a $*$-isomorphism
$L_S(E) \to L_S(F)$ as required in~(\ref{it:cor:graphs3}). Similarly the isomorphism
$\rho$ induces a C*-algebra isomorphism $\rho^* : C^*(\G_E) \to C^*(\G_F)$ satisfying
$\rho^*(f) = f\circ\rho$ for $f \in C_c(\G_E)$. In particular $\rho^*(C_0(\G_E^{(0)})) =
C_0(\G_F^{(0)})$. It is standard that there is an isomorphism $\phi_E : C^*(E) \to
C^*(\G_E)$ that carries $\clsp\{s_\mu s^*_\mu : \mu \in E^*\}$ to $C_0(\G_E^{(0)})$, and
similarly a diagonal-preserving isomorphism $\phi_F : C^*(F) \to C^*(\G_F)$. So $\psi :=
\phi^{-1}_F \circ \rho^* \circ \phi_E$ is the isomorphism in~(\ref{it:cor:graphs5}).

Now we prove (\ref{it:cor:graphs3})~implies~(\ref{it:cor:graphs1}). Suppose
that~(\ref{it:cor:graphs3}) holds. Taking $S = \mathbb{F}_2$ (the field of two elements),
for example, trivially gives~(\ref{it:cor:graphs1}).

For (\ref{it:cor:graphs5})~implies~(\ref{it:cor:graphs1}), suppose
that~(\ref{it:cor:graphs5}) holds. With $\phi_E$ and $\phi_F$ as above, the map $\phi_F
\circ \psi \circ \phi^{-1}_E$ is an isomorphism $C^*(\G_E) \to C^*(\G_F)$ that carries
the Cartan subalgebra $C_0(\G_E^{(0)})$ to the Cartan subalgebra $C_0(\G_F^{(0)})$. So
\cite[Proposition~4.13]{Ren2008} implies that there is an isomorphism $\rho : \G_F \cong
\G_E$ as in the first paragraph. This induces an isomorphism $\rho^* : A_R(\G_E) \to
A_R(\G_F)$ that takes $D_{\G_E}$ to $D_{\G_F}$. So $\alpha^{-1}_F \circ \rho^* \circ
\alpha_E$ is the desired isomorphism of Leavitt path algebras.

It remains to prove (\ref{it:cor:graphs4})\;$\iff$\;(\ref{it:cor:graphs5}). The
implication (\ref{it:cor:graphs4})~implies~(\ref{it:cor:graphs5}) is trivial; and
(\ref{it:cor:graphs5})~implies~(\ref{it:cor:graphs4}) by the same argument as we used for
(\ref{it:cor:graphs2})~implies~(\ref{it:cor:graphs1}) because Renault's theorems prove
that $\clsp\{s_\eta s^*_\eta : \eta \in F^*\}$ is a maximal abelian subalgebra of
$C^*(F)$.
\end{proof}

\begin{remark}\label{rmk:BCaH}
We learned of the paper \cite{BCaH} in the later stages of the preparation of this
manuscript. Our Corollary~\ref{cor:graphs} is related to the main theorem
\cite[Theorem~5.3]{BCaH}, though neither strictly generalises the other. There are two
differences between the two results:
\begin{itemize}
\item Theorem~5.3 of \cite{BCaH} applies to row-finite graphs $E$ and $F$ with no
    sinks, whereas our result applies to arbitrary graphs $E$ and $F$ in which
    every cycle has an exit.
\item The hypotheses of \cite[Theorem~5.3]{BCaH} demand that the isomorphism $\pi :
    L_R(E) \to L_R(F)$ should be a $^*$-isomorphism, and that it should restrict to
    an isomorphism $\pi : D_E \to D_F$; whereas Corollary~\ref{cor:graphs} requires
    only a ring isomorphism $L_R(E) \to L_R(F)$ that carries $D_E$ into the commutant
    of $D_F$.
\end{itemize}
\end{remark}

We use our results to obtain an improvement of \cite[Theorem 3.6]{JS}. For its statement, we need some standard
graph-theoretical definitions, as follows.

\begin{definition}
A graph $E$ is said to be:
\begin{enumerate}
 \item {\it strongly connected} if there is a path between any two vertices.
 \item {\it essential} if it has no sinks or sources, and
 \item {\it trivial} if it is a single cycle with no other edges or vertices.
\end{enumerate}
 \end{definition}

\begin{corollary} \label{cor:JS-improvement}
Let $E,F$ be finite, essential, non-trivial, strongly connected graphs, and let $R$ be
any commutative integral domain with~1. If there is an isomorphism $\phi\colon L_R(E) \to
L_R(F)$ such that $\phi (\mathcal D (L_R(E)))\subseteq \mathcal D (L_R(F))$, then
$$\mathrm{sgn}(\mathrm{det}(I- A_E)) = \mathrm{sgn}(\mathrm{det}(I - A_F)).$$
\end{corollary}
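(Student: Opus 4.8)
The plan is to reduce the statement to a known flow-equivalence invariant via the results already established. By Corollary~\ref{cor:ungraded gpds}, the hypothesis that there is a diagonal-preserving ring isomorphism $\phi : L_R(E) \to L_R(F)$ — translating through the isomorphism $\alpha_E : L_R(E) \cong A_R(\G_E)$ carrying $\mathcal D(L_R(E))$ to $D_{\G_E}$, and using that $\G_E,\G_F$ are topologically principal because $E,F$ are essential, non-trivial and strongly connected (so in particular every cycle has an exit) — gives a topological groupoid isomorphism $\G_E \cong \G_F$. So the first step is simply to observe that $E$ and $F$ have isomorphic graph groupoids.

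Next I would invoke the known classification of graph groupoids of finite, essential, strongly connected graphs in terms of the underlying shifts of finite type. For such graphs the graph groupoid $\G_E$ is (up to isomorphism) the Deaconu--Renault groupoid of the one-sided shift $(X_E,\sigma_E)$, and an isomorphism of these groupoids is equivalent to continuous orbit equivalence of the one-sided shifts $(X_E,\sigma_E)$ and $(X_F,\sigma_F)$; this is exactly the content invoked in \cite{JS} (cf.\ \cite[Theorem~3.6]{JS} and the results of Matsumoto and Matsumoto--Matui that it rests on). Thus the second step is: a diagonal-preserving isomorphism $L_R(E)\cong L_R(F)$ yields a continuous orbit equivalence between the associated one-sided shifts of finite type.

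Finally, I would quote the dynamical invariant: continuous orbit equivalence of one-sided irreducible shifts of finite type preserves the sign of the determinant $\det(I-A)$ of the associated adjacency matrix. This is precisely \cite[Theorem~3.6]{JS} (building on Matsumoto--Matui), and the conclusion $\mathrm{sgn}(\det(I-A_E)) = \mathrm{sgn}(\det(I-A_F))$ follows immediately. The "improvement" over \cite[Theorem~3.6]{JS} is that we no longer require $\phi$ to be a $^*$-isomorphism, nor that it restrict to an isomorphism of diagonals — a ring isomorphism carrying $\mathcal D(L_R(E))$ \emph{into} $\mathcal D(L_R(F))$ suffices, since Lemma~\ref{lem:masa} upgrades this automatically to an equality $\phi(\mathcal D(L_R(E))) = \mathcal D(L_R(F))$ (as $\mathcal D(L_R(F))$ is maximal abelian), which is all that Corollary~\ref{cor:ungraded gpds} needs.

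The main obstacle is bookkeeping rather than mathematics: one must be careful that the identification of the graph groupoid with the Deaconu--Renault groupoid of the shift, and the translation between groupoid isomorphism and continuous orbit equivalence, are valid precisely under the finiteness/essentiality/irreducibility hypotheses imposed, and that the sign-of-determinant invariant as stated in \cite{JS} applies to this class. Once those citations are lined up, the argument is a short chain of implications.
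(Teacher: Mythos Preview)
Your proposal is correct and follows essentially the same strategy as the paper: verify that every cycle has an exit, use the paper's reconstruction results to pass from the diagonal-preserving ring isomorphism to an isomorphism at the groupoid/$C^*$-level, and then cite the known sign-of-determinant invariant from \cite{JS} (and \cite{MM}). The only cosmetic difference is that the paper routes through Corollary~\ref{cor:graphs} to obtain a diagonal-preserving $C^*$-isomorphism $C^*(E)\cong C^*(F)$ and then applies \cite[Theorem~3.3]{JS} directly, whereas you go via Corollary~\ref{cor:ungraded gpds} to a groupoid isomorphism and then through continuous orbit equivalence before invoking \cite{JS}; these two routes are equivalent, and your citation of \cite[Theorem~3.6]{JS} should be \cite[Theorem~3.3]{JS} (the paper also cross-references \cite[Theorem~3.6]{MM}).
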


\begin{proof}
It is straightforward to check that the conditions on $E$ and $F$ imply that both graphs have the property that every cycle has an exit.
By Corollary \ref{cor:graphs}, we obtain a $C^*$-algebra isomorphism $\overline{\phi}\colon C^*(E)\to C^*(F)$ such that
$\overline{\phi}(\mathcal D (C^*(E))) = \mathcal D (C^*(F))$. It follows from \cite[Theorem 3.3]{JS} (cf. \cite[Theorem 3.6]{MM}) that
$\mathrm{sgn}(\mathrm{det}(I-A_E)) = \mathrm{sgn}(\mathrm{det}(I - A_F)).$
\end{proof}

This result can be applied to give a partial answer to one of the most intriguing open questions in the theory of Leavitt path algebras; namely, whether, for a
commutative coefficient ring $R$ with~1, the algebras $L_{2,R}$ and $L_{2-, R}$ are isomorphic. Johansen and S\o rensen have recently shown that
there is no $*$-isomorphism between $L_{2,\mathbb Z}$ and $L_{2-,\mathbb Z}$ (\cite{JS}). Recall from e.g. \cite{JS} that $L_{2,R}$
denotes the classical Leavitt algebra
of type $(1,2)$ with coefficients in $R$. It is the Leavitt path $R$-algebra of the graph $E_2$ with one vertex and two arrows.
The algebra $L_{2-,R}$ is the Leavitt path $R$-algebra associated to a graph $E_{2-}$ depicted in the introduction to \cite{JS}.
Over any regular supercoherent coefficient
ring $R$, both algebras $L_{2,R}$ and $L_{2-,R}$ have trivial algebraic $K$-theory (\cite{ABC}). However, they are distinguished by the numbers appearing in
Corollary \ref{cor:JS-improvement}.

\begin{corollary} \label{distinguishingLs}
Let $R$ be a commutative integral domain with~1. Then there is no isomorphism $\phi
\colon L_{2,R}\to L_{2-,R}$ such that  $\phi(\mathcal D (L_{2,R}))\subseteq \mathcal D
(L_{2-, R})$ or $\phi^{-1} (\mathcal D (L_{2-, R}))\subseteq \mathcal D (L_{2,R})$.
\end{corollary}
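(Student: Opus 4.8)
The plan is to reduce Corollary~\ref{distinguishingLs} to Corollary~\ref{cor:JS-improvement} by exhibiting the graphs $E_2$ and $E_{2-}$ as finite, essential, non-trivial, strongly connected graphs and then computing the relevant determinants. First I would recall the two graphs: $E_2$ has a single vertex $v$ and two loops at $v$, so its adjacency matrix is the $1\times 1$ matrix $A_{E_2} = (2)$; and $E_{2-}$ is the graph from the introduction of \cite{JS} with two vertices, whose adjacency matrix $A_{E_{2-}}$ one reads off from that picture. I would check directly that each of these graphs is finite, has no sinks and no sources (every vertex emits and receives at least one edge), is strongly connected (one can travel between any two vertices along edges), and is non-trivial (it is not a single cycle with no other edges), so that both graphs satisfy the hypotheses of Corollary~\ref{cor:JS-improvement}.

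Next I would argue by contradiction: suppose $\phi\colon L_{2,R}\to L_{2-,R}$ is an isomorphism with $\phi(\mathcal D(L_{2,R}))\subseteq \mathcal D(L_{2-,R})$. Writing $L_{2,R}=L_R(E_2)$ and $L_{2-,R}=L_R(E_{2-})$, Corollary~\ref{cor:JS-improvement} applies and yields $\mathrm{sgn}(\mathrm{det}(I-A_{E_2})) = \mathrm{sgn}(\mathrm{det}(I - A_{E_{2-}}))$. Now I would simply compute: $\det(I-A_{E_2}) = 1-2 = -1$, so the left-hand sign is $-1$; and computing $\det(I - A_{E_{2-}})$ from the explicit $2\times 2$ matrix gives a value whose sign is $+1$ (this is exactly the numerical distinction that underlies the whole discussion preceding the corollary, and is the point of \cite{JS}). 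This contradicts the equality of signs, ruling out the existence of such a $\phi$. For the second alternative in the statement, where instead $\phi^{-1}(\mathcal D(L_{2-,R}))\subseteq \mathcal D(L_{2,R})$, I would apply the same argument to the isomorphism $\phi^{-1}\colon L_{2-,R}\to L_{2,R}$, using Corollary~\ref{cor:JS-improvement} with the roles of $E$ and $F$ reversed; the sign equality is symmetric, so the same contradiction results.

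The only mildly delicate point is making sure the hypotheses of Corollary~\ref{cor:JS-improvement} genuinely hold for $E_{2-}$ — in particular that it is strongly connected and essential — which is a direct inspection of the graph from \cite{JS}; there is no real obstacle here beyond being careful to use the correct version of that graph. Everything else is a two-line determinant computation plus the already-established machinery. I expect no step to pose a serious difficulty; the content is entirely in the prior corollaries, and this final statement is a clean application.
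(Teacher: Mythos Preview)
Your proposal is correct and follows essentially the same argument as the paper: verify the graph-theoretic hypotheses on $E_2$ and $E_{2-}$, apply Corollary~\ref{cor:JS-improvement} to obtain the sign equality, and then compute $\det(I-A_{E_2})=-1$ and $\det(I-A_{E_{2-}})=+1$ to reach a contradiction. One minor slip: $E_{2-}$ has three vertices rather than two, but this does not affect your strategy, and you rightly defer to the picture in \cite{JS} for the precise adjacency matrix.
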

\begin{proof}
Assume  there is an isomorphism $\phi \colon L_{2,R}\to L_{2-,R}$ such that
$\phi(\mathcal D (L_{2,R}))\subseteq \mathcal D (L_{2-, R})$ or $\phi^{-1} (\mathcal D
(L_{2-, R}))\subseteq \mathcal D (L_{2,R})$. The graphs $E_2$ and $E_{2-}$ are finite,
essential, non-trivial and strongly connected. Therefore, it follows from Corollary
\ref{cor:JS-improvement} that
\[
\mathrm{sgn}(\mathrm{det}(I- A_{E_2}))
    = \mathrm{sign}(\mathrm{det}(I - A_{E_{2-}})).
\]
However $\mathrm{det}(I- A_{E_2})= -1$ and $\mathrm{det}(I - A_{E_{2-}})=+1$, so we
obtain a contradiction.
\end{proof}

 \subsection{Graded ring-isomorphisms of Kumjian--Pask algebras}

In this section, we emphasise what extra information we obtain by keeping track of the
graded structure in Section~\ref{sec:main}. Recall that for every $k$-graph $\Lambda$,
the associated $k$-graph groupoid $\G_\Lambda$ (see \cite{KP2000} or \cite{FMY}) is
$\mathbb{Z}^k$-graded, and $c^{-1}(0)$ is a principal groupoid. So our main theorem
yields the following:

\begin{corollary}
Suppose that $\Lambda$ and $\Gamma$ are $k$-graphs and that $R$ is a commutative integral
domain with~1. There is a graded ring-isomorphism $\phi : \operatorname{KP}_R(\Lambda)
\cong \operatorname{KP}_R(\Gamma)$ such that $\phi(s_\mu s_{\mu^*})s_\eta s_{\eta^*} =
s_\eta s_{\eta^*}\phi(s_\mu s_{\mu^*})$ for all $\mu \in \Lambda$ and $\eta \in \Gamma$
if and only if the groupoids $\G_\Lambda$ and $\G_\Gamma$ are isomorphic, in which case
there is a diagonal preserving isomorphism $\operatorname{KP}_S(\Lambda) \cong
\operatorname{KP}_S(\Gamma)$ for every ring $S$, and there is a diagonal-preserving
isomorphism $C^*(\Lambda) \cong C^*(\Gamma)$.
\end{corollary}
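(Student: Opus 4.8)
The plan is to lift everything through the concrete isomorphisms between Kumjian--Pask algebras and Steinberg algebras of $k$-graph groupoids, and then invoke Theorem~\ref{thm:main} together with the corresponding $C^*$-algebraic reconstruction result of Renault. First I would recall that for each $k$-graph $\Lambda$ there is a $\mathbb{Z}^k$-graded isomorphism $\alpha_\Lambda : \operatorname{KP}_R(\Lambda) \cong A_R(\G_\Lambda)$ carrying $\lsp_R\{s_\mu s_{\mu^*} : \mu \in \Lambda\}$ onto $D_{\G_\Lambda}$ (the analogue for $k$-graphs of the isomorphism $\alpha_E$ used in Corollary~\ref{cor:graphs}; see \cite{KP2000, FMY}), and that the $k$-graph groupoid carries a canonical cocycle $c : \G_\Lambda \to \mathbb{Z}^k$ whose kernel $c^{-1}(0)$ is principal, hence certainly topologically principal. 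In particular, the hypotheses of Theorem~\ref{thm:main} and of Lemma~\ref{lem:masa} (applied to $c^{-1}(0)$) are satisfied by $\G_\Lambda$ and $\G_\Gamma$.

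Next I would run the equivalence. Given a graded ring-isomorphism $\phi : \operatorname{KP}_R(\Lambda) \to \operatorname{KP}_R(\Gamma)$ with $\phi(s_\mu s_{\mu^*})$ commuting with every $s_\eta s_{\eta^*}$, conjugating by the $\alpha$'s produces a graded ring-isomorphism $\rho := \alpha_\Gamma \circ \phi \circ \alpha_\Lambda^{-1} : A_R(\G_\Lambda) \to A_R(\G_\Gamma)$ under which $\rho(D_{\G_\Lambda})$ lies in the relative commutant of $D_{\G_\Gamma}$. As in the proof of Corollary~\ref{cor:graphs}, since $\rho$ is graded it maps $D_{\G_\Lambda}$ into $A_R(\G_\Gamma)_0$, and by Lemma~\ref{lem:masa} applied to $d^{-1}(0)$ the subalgebra $D_{\G_\Gamma}$ is maximal abelian in $A_R(\G_\Gamma)_0$; hence $\rho(D_{\G_\Lambda}) = D_{\G_\Gamma}$, so Theorem~\ref{thm:main} applies and gives a grading-preserving groupoid isomorphism $\bar\rho : \G_\Lambda \cong \G_\Gamma$. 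Conversely, a groupoid isomorphism $\bar\rho$ induces $\rho(f) = f \circ \bar\rho^{-1}$, a graded diagonal-preserving isomorphism $A_R(\G_\Lambda) \cong A_R(\G_\Gamma)$, and conjugating back by the $\alpha$'s gives the required $\phi$; the commutation condition on $\phi(s_\mu s_{\mu^*})$ is immediate since $\rho$ carries $D$ onto $D$.

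For the ``in which case'' conclusions, I would argue exactly as in the proof of Corollary~\ref{cor:graphs}. Given a groupoid isomorphism $\bar\rho : \G_\Lambda \cong \G_\Gamma$ and any ring $S$, the map $f \mapsto f \circ \bar\rho^{-1}$ is a diagonal-preserving isomorphism $A_S(\G_\Lambda) \cong A_S(\G_\Gamma)$, which conjugates (via the $S$-coefficient versions of $\alpha_\Lambda, \alpha_\Gamma$) to a diagonal-preserving isomorphism $\operatorname{KP}_S(\Lambda) \cong \operatorname{KP}_S(\Gamma)$. At the $C^*$-level, $\bar\rho$ induces $f \mapsto f \circ \bar\rho^{-1}$ on $C_c(\G_\Lambda)$, which extends to a $C^*$-isomorphism $C^*(\G_\Lambda) \cong C^*(\G_\Gamma)$ carrying $C_0(\G_\Lambda^{(0)})$ onto $C_0(\G_\Gamma^{(0)})$; composing with the standard diagonal-preserving identifications $C^*(\Lambda) \cong C^*(\G_\Lambda)$ and $C^*(\Gamma) \cong C^*(\G_\Gamma)$ yields a diagonal-preserving isomorphism $C^*(\Lambda) \cong C^*(\Gamma)$. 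The only points requiring care---and the ones I would treat as the main obstacle---are the bookkeeping that the isomorphism $\alpha_\Lambda$ genuinely is $\mathbb{Z}^k$-graded and diagonal-preserving (so that ``graded ring-isomorphism respecting $\lsp\{s_\mu s_{\mu^*}\}$'' on the Kumjian--Pask side corresponds exactly to ``graded diagonal-preserving ring-isomorphism'' on the Steinberg side), and confirming that $c^{-1}(0)$ is principal for $k$-graph groupoids; both are known and citable from \cite{KP2000, FMY, CS15}, so no new work is needed.
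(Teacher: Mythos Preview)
Your proposal is correct and follows essentially the same approach as the paper: the paper's proof is a one-liner (``The argument is essentially the same as the corresponding implications in Corollary~\ref{cor:graphs}, except that we apply Theorem~\ref{thm:main} instead of Corollary~\ref{cor:ungraded gpds}''), and you have simply unpacked what that means---transport through the graded identifications $\operatorname{KP}_R(\Lambda)\cong A_R(\G_\Lambda)$, use Lemma~\ref{lem:masa} on $c^{-1}(0)$ to upgrade the commutation hypothesis to $\rho(D_{\G_\Lambda})\subseteq D_{\G_\Gamma}$, invoke Theorem~\ref{thm:main}, and then push the resulting groupoid isomorphism back to the algebra and $C^*$-algebra levels exactly as in Corollary~\ref{cor:graphs}.
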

\begin{proof}
The argument is essentially the same as the corresponding implications in
Corollary~\ref{cor:graphs}, except that we apply Theorem~\ref{thm:main} instead of
Corollary~\ref{cor:ungraded gpds}.
\end{proof}

\end{document}